\newtheorem{thm}{Theorem}[section]
\newtheorem{lem}[thm]{Lemma}
\newtheorem{prop}[thm]{Proposition}
\theoremstyle{definition}
\theoremstyle{remark}
\newtheorem{rem}[thm]{\textbf{Remark}}
\newtheorem{rems}[thm]{\textbf{Remarks}}
       \def\@makefnmark{%
               \leavevmode
               \raise.9ex\hbox{\check@mathfonts
                       \fontsize\sf@size\z@\normalfont%
                               \@thefnmark}%
       }
\renewcommand\abstractname{\textbf{Abstract}}
\newcommand{\cal}{\mathcal}
\begin{document}

\title{Stokes Resolvent Estimates in Spaces of Bounded Functions}
%Author1
\author{KEN ABE}
\address[K. ABE]{Graduate School of Mathematical Sciences University of Tokyo, Komaba 3-8-1, Meguro-ku, Tokyo, 153-8914, Japan}
\email{kabe@ms.u-tokyo.ac.jp}
%Author2
\author{YOSHIKAZU GIGA }
\address[Y. GIGA]{Graduate School of Mathematical Sciences University of Tokyo, Komaba 3-8-1, Meguro-ku, Tokyo, 153-8914, Japan}    
\email{labgiga@ms.u-tokyo.ac.jp}
%Author3
\author{Matthias Hieber}
\address[M. HIEBER]{Technische Universit\"at Darmstadt Fachbereich Mathematik Schlossgartenstr. 7, D-64289 Darmstadt, Germany and Center of Smart Interfaces Petersenstr. 32, D-64287 Darmstadt}
\email{hieber@mathematik.tu-darmstadt.de}
\subjclass[2010]{35Q35, 35K90}
\keywords{Analytic semigroups, bounded function spaces, 
resolvent estimates . \\
\mbox{}{\hspace{.3cm}\textit{ Mots cl\'es.}}
Semi-groupes holomorphes, espace des fonctions born\'ees, estimation 
de la r\'esolvante
}

\date{}

\maketitle

%Abstract
\begin{abstract}
The Stokes equation on a domain $\Omega \subset \mathbf{R}^n$ is well
understood in the $L^p$-setting for a large class of domains including bounded and exterior domains with smooth boundaries provided $1<p<\infty$.
The situation is very different for the case $p=\infty$ since in this case the Helmholtz projection does  not act as a bounded  operator anymore. Nevertheless it was recently proved by the first and the second author
of this paper by a contradiction argument that the Stokes operator
generates an analytic semigroup on spaces of bounded functions for a
large class of domains.
 This paper presents a new approach as well as  new a
priori $L^\infty$-type estimates to the Stokes equation. They imply in particular that the Stokes operator generates a $C_0$-analytic semigroup of
angle $\pi/2$ on $C_{0,\sigma}(\Omega)$, or a non-$C_0$-analytic semigroup on $L^\infty_\sigma(\Omega)$ for a large class of domains. The approach presented is inspired by the so called Masuda-Stewart technique for elliptic operators. It is shown furthermore that the method presented applies also to different type of boundary conditions as, e.g., Robin boundary conditions.
\end{abstract}

\renewcommand\abstractname{\textbf{R\'esum\'e}}

\begin{abstract}
L'\'equation de Stokes sur un ouvert $\Omega \subset \mathbf{R}^n$ est bien compris dans le cadre de $L^p$ pour $1<p<\infty$ et pour une grande classe d'ouverts r\'eguliers.  La situation est bien diff\'erent pour le cas $p=\infty$, car la projection de Leray n'est pas born\'ee dans ce cas. Il \'etait d\'emontr\'e par le premier et second auteur de cet article que l'op\'erateur de Stokes quand-m\^eme engendre un semigroup holomorphe sur des espaces de
fonctions born\'ees pour une grande classe d'ouverts.
  Cet article pr\'esent une nouvelle approche  et des nouvelles \'estimations \`a priori de type $L^\infty$ pour l'equation de Stokes. Celles-ci impliquent
en particulier que l'op\'erateur de Stokes engendre un semigroup holomorphe d'angle $\pi/2$ sur $L^\infty_\sigma(\Omega)$ (pas fortement continue) ou $C_{0,\sigma}(\Omega)$
pour une grande classe d'ouverts $\Omega$. L'approche est inspir\'ee par la m\'ethode de Masuda-Stewart. Il est d\'emontr\'e de plus que la m\'ethode s'applique aussi \`a d'autres conditions au bord, par example les conditions de Robin.
\end{abstract}

\section{Introduction and main results}

The investigation of  the linear Stokes equations as well as properties and corresponding estimates are often basis for the analysis of the nonlinear Navier-Stokes equations. In particular, analyticity of the solution operator (called the Stokes semigroup) plays a fundamental role for studying the Navier-Stokes equations. It is well-known that the Stokes semigroup forms an analytic semigroup on $L^{p}_{\sigma}(\Omega)$ for $p\in (1,\infty)$, the space of $L^{p}$-solenoidal vector fields, for various kind of domains $\Omega\subset \mathbf{R}^{n}, n\geq2 $ including bounded and  exterior domains having smooth boundaries; see, e.g., \cite{Sl77}, \cite{Gg81}. By now, analyticity results are known for other type unbounded domains, see \cite{FS1}, \cite{FS}, \cite{AS} (\cite{AbT}, \cite{Ab} with variable viscosity coefficients) and Lipschitz domains \cite{Shen}. An $\tilde{L}^{p}$-theory is developed in \cite{FKS}, \cite{FKS1}, \cite{FKS2} for a general domain. Moreover, $L^{p}$-theory is investigated in \cite{GHHS} for unbounded domains, for which the Helmholtz projection is bounded.

It is the aim of this paper to consider the case $p=\infty$. Note that the Helmholtz projection is no longer bounded in $L^{\infty}$ even if $\Omega=\mathbf{R}^{n}$. When $\Omega = \mathbf{R}^n_+$, the analyticity of the semigroup is known in $L^{\infty}$-type spaces including $C_{0,\sigma}(\Omega)$, the $L^{\infty}$-closure of $C^{\infty}_{c,\sigma}(\Omega)$, the space of all smooth solenoidal vector fields compactly supported in $\Omega$ \cite{DHP}(see also \cite{Sl03}, \cite{MarS}). Their approach is based on explicit calculations of the solution operator $R(\lambda):f\mapsto v=v_{\lambda}$ of the 
corresponding resolvent problem of 
\begin{align*}
\lambda v - \Delta v  + \nabla q&= f \quad \mathrm{in}\ \  \Omega,   \tag{1.1}\\
\textrm{div}\ v&=0\quad \mathrm{in}\  \Omega,  \tag{1.2} \\
v&=0\quad \mathrm{on}\ \ \partial\Omega.   \tag{1.3}
\end{align*} 
As recently shown in \cite{AG1}, \cite{AG2} by a blow-up argument to the non-stationary Stokes equations, it turns out that the Stokes semigroup is extendable to an analytic semigroup on $C_{0,\sigma}$ for what is called \textit{admissible domains} which include bounded and exterior domains  having boundaries of class $C^3$. 

In this paper, we present a direct resolvent approach to the Stokes resolvent equations (1.1)--(1.3) and establish the a priori estimate of the form
\begin{equation*}
M_{p}(v,q)(x,\lambda)=|\lambda||v(x)|+|\lambda|^{1/2}|\nabla v(x)|+|\lambda|^{n/2p}||\nabla^{2} v||_{L^{p}(\Omega_{x,|\lambda|^{-1/2}})}+|\lambda|^{n/2p}||\nabla q||_{L^{p}(\Omega_{x,|\lambda|^{-1/2}})}, 
\end{equation*}
for $p>n$ and   
\begin{equation*}
\sup_{\lambda\in \Sigma_{\vartheta,\delta}}||M_{p}(v,q)||_{L^{\infty}(\Omega)}(\lambda)\leq C||f||_{L^{\infty}(\Omega)}   \tag{1.4}
\end{equation*}
for some constant $C>0$ independent of $f$. Here, $\Omega_{x,r}$ denotes the intersection of $\Omega$ with an open ball $B_{x}(r)$ centered at $x \in \Omega$ with radius $r>0$, i.e., $\Omega_{x,r}= B_{x}(r)\cap \Omega$ and $\Sigma_{\vartheta,\delta}$ denotes the sectorial region in the complex plane given by $\Sigma_{\vartheta,\delta}=\{\lambda\in \mathbf{C}\backslash \{0\}\ |\ |\arg{\lambda}|<\vartheta,\ |\lambda|>\delta\}$ for $\vartheta \in (\pi/2,\pi)$ and $\delta>0$. Our approach is inspired by the corresponding approach for general elliptic operators. K. Masuda was the first to prove analyticity of the semigroup associated to general elliptic operators in $C_{0}(\mathbf{R}^{n})$ including the case of higher orders \cite{Ma1}, \cite{Ma2} (\cite{Mas}.) This result was then extended by H. B. Stewart to the case for the Dirichlet problem \cite{Ste74} and more general boundary condition \cite{Ste80}. This Masuda-Stewart method was applied to many other situations \cite{AT},  \cite{L}, \cite{HHS}, \cite{Am}, \cite{APP}. However, its application to the resolvent Stokes equations (1.1)--(1.3) was unknown.  

In the sequel, we prove the estimate (1.4) by invoking the $L^{p}$-estimates for the Stokes resolvent equations with inhomogeneous divergence condition \cite{FS92}, \cite{FS1}. We invoke \textit{strictly admissibility} of a domain introduced in \cite[Definition 2.4]{AG2}  which implies an estimate of pressure $q$ in terms of the velocity by
\begin{equation*}
\sup_{x\in\Omega}d_\Omega(x)|\nabla q(x)| \leq C_\Omega  \tag{1.5}
\|W(v) \|_{L^\infty(\partial\Omega)} \quad\textrm{for}\ W(v)=-(\nabla v-\nabla^{T} v)n_{\Omega},
\end{equation*}
where $\nabla f$ denotes $(\partial f_{i}/\partial x_{j})_{1\leq i,j \leq n}$ and $\nabla^{T} f=(\nabla f)^{T}$ for a vector field $f=(f_{i})_{1\leq i\leq n}$. The estimate (1.5) plays a key role in transferring results from the elliptic situation to the  situation of the Stokes system. Here, $n_{\Omega}$ denotes the unit outward normal vector field on $\partial\Omega$ and $d_\Omega$ denotes the distance function from the boundary, i.e., $d_\Omega(x)=\inf_{y \in \partial\Omega}|x-y|$ for $x \in \Omega$. The estimate (1.5) can be viewed as a regularizing-type estimate for solutions to the Laplace equation $\Delta P=0$ in $\Omega$ with the Neumann boundary condition $\partial P/\partial n_{\Omega}=\textrm{div}_{\partial\Omega} W$ on $\partial\Omega$ for a tangential vector field $W$, where $\textrm{div}_{\partial\Omega}=\textrm{tr}\ \nabla _{\partial\Omega}$ denotes the surface divergence and $\nabla_{\partial\Omega}=\nabla-n_{\Omega}(n_{\Omega}\cdot \nabla)$ is the gradient on $\partial\Omega$. It is known that $P=q$ solves this Neumann problem for $W=W(v)$ given by (1.5) \cite[Lemma 2.8]{AG2} and the estimate (1.5) holds for bounded domains \cite{AG1} and exterior domains \cite{AG2}. Note that when $n=3$, $W(v)$ is nothing but a tangential trace of vorticity, i.e., $W(v)=-\textrm{curl}\ v\times n_{\Omega}$. We call $\Omega$ \textit{strictly admissible} if there exists a constant $C=C_{\Omega}$ such that the a priori estimate 
\begin{equation*}
||\nabla P||_{L^{\infty}_{d}(\Omega)}\leq C ||W||_{L^{\infty}(\partial\Omega)}  \tag{1.6}
\end{equation*}
 holds for all solutions $P$ of the Neumann problem for a tangential vector field $W\in L^{\infty}(\partial\Omega)$. Here $L^{\infty}_{d}(\Omega)$ denotes the space of all locally integrable functions $f$ such that $d_{\Omega}f$ is essentially bounded in $\Omega$ and  equipped with the norm $||f||_{L^{\infty}_{d}(\Omega)}=\sup_{x\in \Omega}d_{\Omega}(x)|f(x)|$. The meaning of a solution is understood in the weak sense, i.e., we say $\nabla P\in L^{\infty}_{d}(\Omega)$ is a solution for the Neumann problem if $\int_{\Omega}P\Delta \varphi dx=\int_{\partial\Omega} W\cdot\nabla_{\partial\Omega}\varphi d{{\cal H}^{n-1}}(x)$ holds for all $\varphi\in C^{2}_{c}(\bar\Omega)$ satisfying $\partial\varphi/\partial n_{\Omega}=0$ on $\partial\Omega$, where ${{\cal H}^{n-1}}$ denotes the $n-1$-dimensional Hausdorff measure; see also  \cite[Definition 2.3]{AG2}.\\

We are now in the position to formulate the main results of this paper.

%Theorem 1.1 
\begin{thm}
Let $\Omega$ be a  strictly admissible, uniformly $C^{2}$-domain in $\mathbf{R}^{n}$, $n\geq 2$. Let $p>n$. For $\vartheta\in (\pi/2,\pi)$, there exists constants $\delta$ and $C$ such that the a priori estimate (1.4) holds for all solutions $(v,\nabla q)\in W^{2,p}_{\textrm{loc}}(\bar{\Omega})\times (L^{p}_{\textrm{loc}}(\bar{\Omega})\cap L^{\infty}_{d}(\Omega))$ 
of (1.1)--(1.3) for $f\in C_{0,\sigma}(\Omega)$ and $\lambda\in \Sigma_{\vartheta,\delta}$.    
\end{thm}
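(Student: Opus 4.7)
The plan is to adapt the Masuda--Stewart localization to the Stokes system, with the localization scale tied to $r = |\lambda|^{-1/2}$. For each $x_0 \in \Omega$ fix a cutoff $\phi = \phi_{x_0, r} \in C_c^\infty(\mathbf{R}^n)$ with $\phi \equiv 1$ on $\Omega_{x_0, r/2}$, $\phi \equiv 0$ outside $\Omega_{x_0, r}$, and $|\nabla^k \phi| \lesssim r^{-k}$ for $k = 0, 1, 2$. Since the pressure is determined only up to an additive constant, subtract $c$ (the mean of $q$ on $\Omega_{x_0, r}$ in the interior case); commuting $\phi$ through (1.1)--(1.2), the pair $(\phi v, \phi(q - c))$ satisfies the localized resolvent problem
\begin{equation*}
\lambda(\phi v) - \Delta(\phi v) + \nabla(\phi(q - c)) = F, \qquad \mathrm{div}(\phi v) = v \cdot \nabla \phi,
\end{equation*}
with $F = \phi f - 2(\nabla \phi \cdot \nabla) v - (\Delta \phi) v + (q - c) \nabla \phi$ and zero Dirichlet trace on $\partial\Omega$.

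Next, lift the inhomogeneous divergence $g = v \cdot \nabla \phi$ by a Bogovskii-type operator supported in $\Omega_{x_0, r}$ and apply the $L^p$ Stokes resolvent estimate of Farwig--Sohr \cite{FS92,FS1} to the corrected pair. This yields
\begin{equation*}
|\lambda|\|\phi v\|_{L^p} + |\lambda|^{1/2}\|\nabla(\phi v)\|_{L^p} + \|\nabla^2(\phi v)\|_{L^p} + \|\nabla(\phi(q-c))\|_{L^p} \leq C\bigl(\|F\|_{L^p} + \mbox{divergence lifting}\bigr).
\end{equation*}
Because $p > n$, the Sobolev embedding $W^{2,p} \hookrightarrow C^1$ converts these $L^p$ bounds into pointwise bounds for $v$ and $\nabla v$ at $x_0$; after restoring the scalings dictated by $M_p$, one arrives at
\begin{equation*}
M_p(v, q)(x_0, \lambda) \leq C\|f\|_{L^\infty(\Omega)} + \eta\,\|M_p(v, q)\|_{L^\infty(\Omega)},
\end{equation*}
where $\eta > 0$ collects the contributions of the commutators, the divergence lifting, and the pressure correction.

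The pressure correction $(q - c)\nabla\phi$ is the key new difficulty compared to the scalar elliptic case, and this is where strict admissibility enters. For $x_0$ in the interior, the Poincar\'e inequality on $\Omega_{x_0, r}$ gives $\|(q - c)\nabla\phi\|_{L^p} \lesssim \|\nabla q\|_{L^p(\Omega_{x_0, r})}$, a quantity already counted in $M_p$. For $x_0$ near $\partial\Omega$ the naive Poincar\'e bound breaks down and we invoke (1.5): the pressure $q$ solves the Neumann problem with boundary datum built from $W(v)$, so strict admissibility yields $d_\Omega|\nabla q| \leq C\|W(v)\|_{L^\infty(\partial\Omega)} \leq C\|\nabla v\|_{L^\infty(\Omega)}$. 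Combined with a local boundary flattening and the half-space version of the $L^p$ Stokes estimate, this controls $(q - c)\nabla\phi$ by the $|\lambda|^{1/2}\|\nabla v\|_{L^\infty}$ component of $M_p$.

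The principal obstacle is the absorption step: one must show that $\eta$ can be made strictly less than one. This is precisely what forces the restriction $|\lambda| \geq \delta$ with $\delta$ large. Several error contributions, in particular those stemming from the Bogovskii correction of $v\cdot\nabla\phi$ and from the boundary pressure bound after flattening, carry a factor $|\lambda|^{-\alpha}$ for some $\alpha > 0$, and can therefore be made arbitrarily small by taking $\delta$ large; the remaining dimensionless cutoff constants are then tuned by an appropriate choice of $\phi$, possibly aided by a dyadic iteration over concentric shells. Once $\eta < 1$, taking the supremum over $x_0$ on the left and absorbing on the right yields (1.4). This delicate bookkeeping of absorbable versus non-absorbable terms, together with the use of strict admissibility to handle the pressure near the boundary, is the heart of the argument.
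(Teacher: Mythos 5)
Your overall architecture (localize at scale $r=|\lambda|^{-1/2}$, apply the Farwig--Sohr $L^p$ resolvent estimate, use strict admissibility for the pressure commutator, absorb) is the right one, but the absorption step as you describe it does not close, and the reason is structural. With a single-scale cutoff ($\phi\equiv 1$ on $\Omega_{x_0,r/2}$, supported in $\Omega_{x_0,r}$, $|\nabla^k\phi|\lesssim r^{-k}$) every commutator term is exactly scale-critical at $r=|\lambda|^{-1/2}$: for instance $\|(\Delta\phi)v\|_{L^p}\lesssim r^{n/p-2}\|v\|_{L^\infty}=r^{n/p}|\lambda|\,\|v\|_{L^\infty}$, and after the resolvent estimate and the $r^{-n/p}$ interpolation loss this feeds back into $M_p$ with a constant of order one, not with a factor $|\lambda|^{-\alpha}$. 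Your claim that the error contributions ``carry a factor $|\lambda|^{-\alpha}$'' and can be killed by taking $\delta$ large is therefore false; enlarging $\delta$ only shrinks $r$ and changes nothing, since the whole setup is invariant under the parabolic scaling. The missing idea is the Masuda--Stewart two-parameter cutoff: the paper takes $\theta_0\equiv 1$ on $B_{x_0}(r)$ and supported in $B_{x_0}((\eta+1)r)$, so that $\|\nabla\theta_0\|_\infty\lesssim ((\eta+1)r)^{-1}$; each derivative of the cutoff then gains a factor $(\eta+1)^{-1}$ while the volume of the support only costs $(\eta+1)^{n/p}$, and the net factor $(\eta+1)^{-(1-n/p)}$ (resp.\ $(\eta+1)^{-(1-2n/p)}$ for the hardest pressure term, whence the preliminary restriction $p>2n$ followed by a bootstrap to all $p>n$) tends to zero precisely because $p>n$. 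The role of $\delta=\delta_\eta=(\eta+2)^2/r_0^2$ is only to keep $(\eta+2)r\le r_0$, not to make the errors small.

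A second, independent gap is the term $|\lambda|\,\mathrm{div}(\phi v)=|\lambda|\,v\cdot\nabla\phi$. If you lift it by a Bogovskii operator and estimate the lift in $L^p$, you get $|\lambda|\,\|v\cdot\nabla\phi\|_{L^p}\lesssim r^{n/p}|\lambda|^{1/2}\cdot|\lambda|\,\|v\|_{L^\infty}$, which overshoots $M_p$ by a factor $|\lambda|^{1/2}$. The paper instead measures this term in $W^{-1,p}_0$ and, crucially, substitutes the equation $\lambda v=f+\Delta v-\nabla q$ before estimating; integration by parts then produces boundary integrals involving $\hat q=q-q_c$, which is why the uniformly local $L^p$-norm of $\nabla q$ must be carried inside $M_p$, why $q_c$ is chosen as the mean over the larger set $\Omega_{x_0,(\eta+2)r}$, and why a sup-bound for $\hat q$ (estimate (3.16)) is needed in addition to the Poincar\'e--Sobolev bound. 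None of this appears in your proposal, and without it the estimate of the inhomogeneous divergence fails by a power of $|\lambda|$.
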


The a priori estimate (1.4) implies the analyticity of the Stokes semigroup in $L^{\infty}$-type spaces. Let us observe the generation of an analytic semigroup in $C_{0,\sigma}(\Omega)$. By invoking the $\tilde{L}^{p}$-theory \cite{FKS}, \cite{FKS1}, \cite{FKS2} we verify the existence of a solution to (1.1)--(1.3), $(v,\nabla q)\in W^{2,p}_{\textrm{loc}}(\bar{\Omega})\times (L^{p}_{\textrm{loc}}(\bar{\Omega})\cap L^{\infty}_{d}(\Omega))$ for $f\in C^{\infty}_{c,\sigma}(\Omega)$ in a uniformly $C^{2}$-domain $\Omega$. The solution operator $R(\lambda)$ is then uniquely extendable to $C_{0,\sigma}(\Omega)$ by the uniform approximation together with the estimates (1.4). Here, the solution operator to the pressure gradient $f\mapsto \nabla q_{\lambda}$ is also uniquely extended for  $f\in C_{0,\sigma}$. We observe that $R(\lambda)$ is injective on $C_{0,\sigma}$ since the estimate (1.5) immediately implies that $f=0$ for $f\in C_{0,\sigma}$ such that $v_{\lambda}=R(\lambda)f=0$. The operator $R(\lambda)$ may be regarded as a surjective operator from $C_{0,\sigma}$ to the range of $R(\lambda)$. The open mapping theorem then implies the existence of a closed operator $A$ such that $R(\lambda)=(\lambda-A)^{-1}$; see \cite[Proposition B.6]{ABHN}. We call $A$ \textit{the Stokes operator} in $C_{0,\sigma}(\Omega)$. From Theorem 1.1, we obtain:  

%Theorem 1.2
\begin{thm}
Let $\Omega$ be a  strictly admissible, uniformly $C^{2}$-domain in $\mathbf{R}^{n}$. Then, the Stokes operator $A$ generates a $C_{0}$-analytic semigroup on $C_{0,\sigma}(\Omega)$ of angle $\pi/2$.
\end{thm}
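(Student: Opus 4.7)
The plan is to convert the a priori bound (1.4), together with the construction of $A$ recalled just before the statement, into a sector resolvent estimate, to invoke the classical characterisation of generators of analytic semigroups, and to verify density of $D(A)$ in $C_{0,\sigma}(\Omega)$ — the only substantial ingredient not already supplied by Theorem 1.1.

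First, the construction given before the theorem places $\Sigma_{\vartheta,\delta_\vartheta}\subset\rho(A)$ for every $\vartheta\in(\pi/2,\pi)$, while the leading term $|\lambda||v(x)|$ in $M_p$ inside (1.4) yields
$$\|(\lambda-A)^{-1}\|_{\mathcal{L}(C_{0,\sigma}(\Omega))}\le C_\vartheta/|\lambda|,\qquad \lambda\in\Sigma_{\vartheta,\delta_\vartheta}.$$
Choosing $\omega_\vartheta>0$ large enough one has $\omega_\vartheta+\Sigma_{\vartheta'}\subset\Sigma_{\vartheta,\delta_\vartheta}$ for every $\vartheta'<\vartheta$, so $A-\omega_\vartheta$ satisfies the standard sector resolvent bound with vertex at the origin on $\Sigma_{\vartheta'}$. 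Since this holds for every $\vartheta'<\pi$, the classical characterisation of generators of bounded analytic semigroups (see e.g.\ \cite{ABHN}) gives, \emph{provided $D(A)$ is dense}, that $A-\omega_\vartheta$, and hence $A$, generates a $C_0$-analytic semigroup on $C_{0,\sigma}(\Omega)$ of angle $\pi/2$.

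The remaining, and only non-trivial, point is the density of $D(A)$. Since $C^\infty_{c,\sigma}(\Omega)$ is dense in $C_{0,\sigma}(\Omega)$ by definition, it suffices to show
$$\lambda R(\lambda)\phi\longrightarrow\phi\quad\text{in }L^\infty(\Omega),\qquad \lambda\to+\infty,$$
for every $\phi\in C^\infty_{c,\sigma}(\Omega)$; indeed $\lambda R(\lambda)\phi\in D(A)$, so this forces $\phi\in\overline{D(A)}$. I would establish the convergence by comparison with the $L^p$-theory: fix $p\in(n,\infty)$; by \cite{FKS,FKS1,FKS2,GHHS} the Stokes operator $A_p$ on $L^p_\sigma(\Omega)$ generates an analytic $C_0$-semigroup, whence $\lambda(\lambda-A_p)^{-1}\phi\to\phi$ in $L^p$. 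Uniqueness of (1.1)--(1.3) in the class $W^{2,p}_{\mathrm{loc}}\times(L^p_{\mathrm{loc}}\cap L^\infty_d)$ identifies $(\lambda-A_p)^{-1}\phi$ with $R(\lambda)\phi$, so $w_\lambda:=\lambda R(\lambda)\phi-\phi\to 0$ in $L^p$ while remaining bounded in $L^\infty$ by (1.4). Combining this $L^p$-decay with the local $W^{2,p}$-bound supplied by the $\nabla^2 v$ term of (1.4) on balls of radius $|\lambda|^{-1/2}$, together with a rescaled Morrey embedding ($p>n$), should then upgrade $L^p$-smallness of $w_\lambda$ to $L^\infty$-smallness.

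The hard part will be precisely this $L^p\to L^\infty$ upgrade. The naive global bound $\|\nabla(\lambda v_\lambda)\|_\infty\lesssim|\lambda|^{1/2}\|\phi\|_\infty$ read off from (1.4) blows up as $\lambda\to\infty$, so $\{\lambda v_\lambda\}_\lambda$ is not equi-Lipschitz and a direct Arzelà--Ascoli argument is unavailable. One has to exploit the fact that (1.4) furnishes $W^{2,p}$-control only on the intrinsic scale $|\lambda|^{-1/2}$, and rescale these local estimates carefully in order to turn $L^p$-smallness on each such small ball into pointwise smallness uniform in $x\in\Omega$.
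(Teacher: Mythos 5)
Your reduction of the theorem to (a) a sector resolvent bound and (b) density of $D(A)$ is exactly right, and part (a) matches the paper: the term $|\lambda||v(x)|$ in $M_{p}$ gives $\|(\lambda-A)^{-1}\|\le C_\vartheta/|\lambda|$ on $\Sigma_{\vartheta,\delta_\vartheta}$ for every $\vartheta\in(\pi/2,\pi)$, and after a shift the standard generation theorem yields analyticity of angle $\pi/2$ once $D(A)$ is dense. The gap is in (b), and you have flagged it yourself: the $L^{p}\to L^{\infty}$ upgrade of $w_\lambda=\lambda R(\lambda)\phi-\phi$ is announced (``should then upgrade'') but not carried out, and your own observation that $\{\lambda v_\lambda\}$ fails to be equi-Lipschitz shows the mechanism is still missing from your write-up. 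The paper (Remarks 1.4 (iii)) disposes of the whole issue with one identity: for $\phi\in C^{\infty}_{c,\sigma}\subset D(\tilde A_{p})$ one has $\lambda R(\lambda)\phi-\phi=R(\lambda)\tilde A_{p}\phi$ (the resolvent identity in the $\tilde L^{p}$-setting, transported to $R(\lambda)$ by the very uniqueness you invoke), and since $\tilde A_{p}\phi$ is a bounded solenoidal field, the already-established bound $\|R(\lambda)g\|_{L^{\infty}}\le C|\lambda|^{-1}\|g\|_{L^{\infty}}$ applied to the \emph{new datum} $g=\tilde A_{p}\phi$ gives $\|\lambda R(\lambda)\phi-\phi\|_{L^{\infty}}\le C|\lambda|^{-1}\|\tilde A_{p}\phi\|_{L^{\infty}}\to0$. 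No passage through $L^{p}$-smallness is needed.

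That said, your route can be closed if you quantify it, and it is worth seeing why: the same identity in $\tilde L^{p}$ gives $\|w_\lambda\|_{L^{p}}\le C|\lambda|^{-1}\|\tilde A_{p}\phi\|_{L^{p}}$ and $\|\nabla w_\lambda\|_{L^{p}}\le C|\lambda|^{-1/2}\|\tilde A_{p}\phi\|_{L^{p}}$ (note that for a general uniformly $C^{2}$ domain the Helmholtz projection need not be bounded on $L^{p}$, so one must use the $\tilde L^{p}$-theory rather than the $L^{p}_\sigma$-semigroup you cite), and the interpolation inequality (1.11) at scale $r=|\lambda|^{-1/2}$ costs only the factor $r^{-n/p}=|\lambda|^{n/2p}$ with $n/2p<1/2$, so $\|w_\lambda\|_{L^{\infty}}\lesssim|\lambda|^{n/2p-1}\to0$. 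So the gap is repairable, but as written the proof is incomplete at its only nontrivial point, and the repair is strictly more work than the one-line resolvent identity that makes the difficulty disappear.
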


We next consider the space $L^{\infty}_{\sigma}(\Omega)$ defined by
\begin{equation*}
L^{\infty}_{\sigma}(\Omega)=\left\{f\in L^{\infty}(\Omega)\ \Bigg|\ \int_{\Omega}f\cdot \nabla \varphi dx=0\quad \textrm{for all}\ \varphi\in \hat{W}^{1,1}(\Omega)\right\},
\end{equation*}
where $\hat{W}^{1,1}(\Omega)$ denotes the homogeneous Sobolev space of the form $\hat{W}^{1,1}(\Omega)=\{\varphi\in L^{1}_{\textrm{loc}}(\Omega)\ |\ \nabla\varphi\in L^{1}(\Omega)\}$. Note that $C_{0,\sigma}(\Omega)\subset L^{\infty}_{\sigma}(\Omega)$. When the domain $\Omega$ is unbounded, the space $L^{\infty}_{\sigma}(\Omega)$ includes non-decaying solenoidal vector fields at the space infinity. Actually, the a priori estimates (1.4) is also valid for $f\in L^{\infty}_{\sigma}$. In particular, (1.4) implies the uniqueness of a solution for $f\in L^{\infty}_{\sigma}$. We verify the existence of a solution by  approximating $f\in L^{\infty}_{\sigma}$ with compactly supported solenoidal vector fields $\{f_{m}\}_{m=1}^{\infty}\subset C^{\infty}_{c,\sigma}$. Note that $f\in L^{\infty}_{\sigma}$ is not  approximated in the uniform topology by the elements of $C^{\infty}_{c,\sigma}$ in general. We thus weaken the convergence topology to the pointwise convergence, i.e., $f_m\to f$ a.e. in $\Omega$ and $||f_{m}||_{L^{\infty}(\Omega)}\leq C||f||_{L^{\infty}(\Omega)}$ with some constant $C=C_{\Omega}$. When the domain $\Omega$ is bounded, this approximation is valid \cite[Lemma 6.3]{AG1}. Although this approximation is unknown in general, for exterior domains, it is known to hold \cite[Lemma 5.1]{AG2}. In the following, we restrict our results to bounded and exterior domains. By the approximation argument for $L^{\infty}_{\sigma}$, we verify the existence of a solution to (1.1)--(1.3) for general $f\in L^{\infty}_{\sigma}$. We then define the Stokes operator on $L^{\infty}_{\sigma}$ by the same way as for $C_{0,\sigma}$. Since bounded and exterior domains are strictly admissible \cite[Theorem 2.5]{AG1}, \cite[Theorem 3.1]{AG2} provided that the boundary is $C^{3}$, we have:

%Theorem 1.3
\begin{thm}
Assume that $\Omega$ is a bounded or an exterior domain with $C^{3}$-boundary. Then, the Stokes operator $A$ generates a (non-$C_0$-)analytic semigroup on $L^{\infty}_{\sigma}(\Omega)$ of angle $\pi/2$.
\end{thm}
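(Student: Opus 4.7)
The plan is to reduce the statement to Theorem~1.1, now applied to data in $L^{\infty}_{\sigma}(\Omega)$ rather than in $C_{0,\sigma}(\Omega)$. As a first step I would observe that the a priori estimate (1.4) was derived by a local Masuda--Stewart argument combined with the global pressure bound (1.5); neither ingredient exploits decay of $f$ at infinity. Consequently (1.4) remains valid verbatim for any solution $(v,\nabla q)$ in the regularity class stated in Theorem~1.1 whose source $f$ lies in $L^{\infty}_{\sigma}$. In particular this yields uniqueness of solutions to (1.1)--(1.3) in that class for every $\lambda\in\Sigma_{\vartheta,\delta}$.

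To produce a solution for a general $f\in L^{\infty}_{\sigma}$, I would exploit the fact that bounded and exterior $C^{3}$-domains are covered by the approximation results of \cite[Lemma 6.3]{AG1} and \cite[Lemma 5.1]{AG2}: there exists $\{f_m\}\subset C^{\infty}_{c,\sigma}(\Omega)$ with $f_m\to f$ pointwise a.e.\ on $\Omega$ and $\|f_m\|_{\infty}\leq C_{\Omega}\|f\|_{\infty}$. For each $f_m$ the $\tilde{L}^{p}$-theory of \cite{FKS}, \cite{FKS1}, \cite{FKS2} provides a solution $(v_m,\nabla q_m)$ in the regularity class of Theorem~1.1, and the estimate (1.4) applied to $f_m$ delivers a bound on $M_p(v_m,q_m)$ that is uniform in $m$. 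Combining interior and boundary $W^{2,p}$-estimates with weak-$*$ compactness in $L^{\infty}(\Omega)$ and $L^{\infty}_{d}(\Omega)$, I would extract a subsequential limit $(v,\nabla q)$ that solves (1.1)--(1.3) and, by pointwise lower semicontinuity of the individual terms in $M_p$, satisfies (1.4) with the same constant.

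With existence and uniqueness in hand, the map $R(\lambda):f\mapsto v$ is a well-defined bounded linear operator on $L^{\infty}_{\sigma}$ for every $\lambda\in\Sigma_{\vartheta,\delta}$, injective because (1.4) applied with $v=0$ forces $f=0$, and satisfying the resolvent identity inherited along the approximation from the $\tilde{L}^{p}$ level. Exactly as in the paragraph preceding Theorem~1.2, \cite[Proposition B.6]{ABHN} then produces a closed operator $A$ on $L^{\infty}_{\sigma}$ with $R(\lambda)=(\lambda-A)^{-1}$. Since Theorem~1.1 furnishes the resolvent bound $\|\lambda(\lambda-A)^{-1}\|\leq C_{\vartheta}$ on $\Sigma_{\vartheta,\delta}$ for every $\vartheta\in(\pi/2,\pi)$, the standard sectorial criterion concludes that $A$ generates an analytic semigroup of angle $\vartheta-\pi/2$ for each such $\vartheta$, and hence of the maximal angle $\pi/2$. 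Strong continuity at $t=0$ fails on all of $L^{\infty}_{\sigma}$ because this space contains non-decaying solenoidal vector fields that lie outside the $L^{\infty}$-closure of $\mathcal{D}(A)\subset C_{0,\sigma}$.

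The main obstacle I anticipate is the passage to the limit for the pressure gradient in the approximation step: $\nabla q_m$ is controlled only in the weighted space $L^{\infty}_{d}(\Omega)$, which provides at best weak-$*$ compactness of its unit ball. To identify the weak-$*$ limit as the gradient of a function $q$ for which $(v,\nabla q)$ solves the Stokes resolvent problem, one has to use the Neumann-problem characterization of $q$ recalled after (1.5), pass to the limit against test functions $\varphi\in C^{2}_{c}(\bar{\Omega})$ with $\partial\varphi/\partial n_{\Omega}=0$, and then verify that the limiting pair inherits the pointwise estimate (1.4) rather than merely a weaker averaged version of it.
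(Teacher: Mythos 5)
Your proposal is correct and follows essentially the same route as the paper: the a priori estimate (1.4) is noted to hold for $f\in L^{\infty}_{\sigma}$ (giving uniqueness), existence is obtained by the a.e.-approximation of $f$ by $C^{\infty}_{c,\sigma}$ fields with uniform sup-bounds from \cite[Lemma 6.3]{AG1} and \cite[Lemma 5.1]{AG2}, and the Stokes operator is then defined and shown to be sectorial of angle $\pi/2$ exactly as for $C_{0,\sigma}$. Your elaboration of the compactness argument for the limit passage (including the pressure identification via the Neumann problem) fills in details the paper leaves implicit but does not constitute a different method.
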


%Remark 1.4
\begin{rems}\normalfont
(i) The direct resolvent approach clarifies the angle of the analyticity of the Stokes semigroup $e^{tA}$ on $C_{0,\sigma}$. Theorem 1.2 (and also Theorem 1.3) asserts that $e^{tA}$ is angle $\pi/2$ on $C_{0,\sigma}$ which does not follow from a priori $L^{\infty}$-estimates for solutions to the non-stationary Stokes equations proved by blow-up arguments \cite[Theorem 1.2]{AG1}, \cite[Lemma 2.12]{AG2}.\\  
\noindent
(ii) We observe that our argument applies to other boundary conditions, for example, to the Robin boundary condition, i.e., $B(v)=0$ and $v\cdot n_{\Omega}=0$ on $\partial\Omega$ where
\begin{equation*}
B(v)=\alpha v_{\textrm{tan}}+(D(v)n_{\Omega})_{\textrm{tan}} \quad \textrm{for}\ \alpha\geq0.
\end{equation*} 
Here, $D(v)=(\nabla v+\nabla^{T}v)/2$ denotes the deformation tensor and $f_{\textrm{tan}}$ the tangential component of a  vector field $f$ on $\partial\Omega$. Note that the case $\alpha=\infty$ corresponds to the Dirichlet boundary condition (1.3); see \cite{Sa} for generation results subject to the Robin boundary conditions on $L^{\infty}$ for $\mathbf{R}^{n}_{+}$. The $L^{p}$-resolvent estimates for the Robin boundary condition was established in \cite{Gg82} concerning analyticity and was later strengthened in \cite{ShbS} to non homogeneous divergence vector fields. We shall use the 
generalized resolvent estimate in \cite{ShbS} to extend our result in spaces of bounded functions to the Robin boundary condition (Theorem 3.6). For a more detailed discussion, see Remark 3.5.\\
(iii) We observe that the domain of the Stokes operator $D(A)$ is dense in $C_{0,\sigma}$. In fact, by the $\tilde{L}^{p}$-theory and (1.4), we have
\begin{equation*}
||\lambda v-f||_{L^{\infty}(\Omega)}
=||\tilde{A}_{p}v||_{L^{\infty}(\Omega)}
\leq \frac{C}{|\lambda|}||\tilde{A}_{p}f||_{L^{\infty}(\Omega)}\to 0,\quad |\lambda|\to \infty
\end{equation*} 
for $f\in C^{\infty}_{c,\sigma}\subset D(\tilde{A}_{p})$, where $\tilde{A}_{p}$ is the Stokes operator in $\tilde{L}^{p}$. Thus, we conclude that $D(A)$ is dense in $C_{0,\sigma}$. On the other hand, smooth functions are not dense in $L^{\infty}$ and $e^{tA}f$ is smooth for $t>0$, $e^{tA}f\to f$ as $t\downarrow 0$ in $L^{\infty}_{\sigma}$ does not hold for some $f\in L^{\infty}_{\sigma}$. This means $e^{tA}$ is a non-$C_0$-analytic semigroup. We refer to \cite[1.1.2]{Sin85} for properties of the analytic semigroup generated by non-densely defined sectorial operators; see also \cite[Definition 3.2.5]{ABHN}.\\     
\noindent
(iv) For a bounded domain $\Omega$, $v(\cdot,t)=e^{tA}v_0$ and $\nabla q=(1-\bold{P})[\Delta v]$ give a solution to the non-stationary Stokes equations, $v_{t}-\Delta v+\nabla q=0,\ \textrm{div}\ v=0$ in $\Omega\times (0,\infty)$ with $v=0$ on $\partial\Omega$ for initial data $v_0\in L^{\infty}_{\sigma}(\Omega)$. Although for unbounded domains the Helmholtz projection operator $\bold{P}: L^{p}(\Omega)\to L^{p}_{\sigma}(\Omega)$ is not bounded on $L^{\infty}$ even for $\Omega=\mathbf{R}^{n}$, we are able to define the pressure $\nabla q=\bold{K}[W(v)]$ at least for exterior domains $\Omega$ by the solution operator to the Neumann problem (\textit{harmonic-pressure operator}) $\bold{K}:L^{\infty}_{\textrm{tan}}(\partial\Omega)\ni W\mapsto \nabla P \in L^{\infty}_{d}(\Omega)$ \cite[Remarks 4.3 (ii)]{AG2}. Here, $L^{\infty}_{\textrm{tan}}(\partial\Omega)$ denotes the closed subspace of all tangential vector fields in $L^{\infty}(\partial\Omega)$. \\
\noindent 
(v) We observe that the Masuda-Stewart method does not imply the  large time behavior for $e^{tA}$. For a bounded domain, the energy inequality implies that maximum of $v(\cdot,t)=e^{tA}v_0$ (and also $v_{t}$) decay exponentially as $t\to \infty$ \cite[Remark 5.4 (i)]{AG1}. In particular, $e^{tA}$ is a bounded analytic semigroup on $L^{\infty}_{\sigma}$. Recently, based on the $L^{\infty}$-estimates \cite[Theorem 1.2]{AG1} it was shown in \cite{Mar12} that $e^{tA}$ is a bounded semigroup on $L^{\infty}_{\sigma}$ for $\Omega$ being an exterior domain with smooth boundary. 
\end{rems}

\noindent
In the sequel, we sketch a proof for the a priori estimate (1.4). Our argument can be divided into the following three steps:\\

\noindent
(i) (Localization) We first localize a solution $(v,q)$ of the Stokes equations (1.1)--(1.3) in a domain $\Omega'=B_{x_0}((\eta+1)r)\cap \Omega$ for $x_0\in \Omega, r>0$ and parameters $\eta\geq 1$ by setting $u=v\theta_{0}$ and $p=(q-q_c)\theta_{0}$ with a constant $q_c$ and the smooth cut-off function $\theta_0$ around $\Omega_{x_0,r}$ satisfying $\theta_0\equiv 1$ in $B_{x_0}(r)$ and $\theta_0\equiv 0$ in $B_{x_0}((\eta+1)r)^{c}$. We choose parameters $\eta\geq 1$ and $r>0$ so that $(\eta+2)r\leq r_0$ with some constant $r_0$. We then observe that $(u,p)$ solves the Stokes resolvent equations with inhomogeneous divergence condition in the localized domain $\Omega'$. Since we adjust parameters $\eta\geq 1$ later, we take a $C^{2}$-bounded domain $\Omega''$ so that $\Omega_{x_0,r_{0}}\subset \Omega''\subset \Omega$. Then, $\Omega'\subset \Omega''$ for all $\eta\geq 1$ and $r>0$ satisfying $(\eta+2)r\leq r_0$. We apply the $L^{p}$-estimate for the localized Stokes equations in $\Omega''$ to get 
\begin{align*}
&|\lambda| ||u||_{L^{p}(\Omega'')}+|\lambda|^{1/2} ||\nabla u||_{L^{p}(\Omega'')}+||\nabla^{2}u||_{L^{p}(\Omega'')}+||\nabla p||_{L^{p}(\Omega'')}\\
&\leq C_{p}\left(||h||_{L^{p}(\Omega'')}+||\nabla g||_{L^{p}(\Omega'')}+|\lambda| ||g||_{W^{-1,p}_{0}(\Omega'')}\right), \tag{1.7} 
\end{align*} 
where $W^{-1,p}_{0}(\Omega'')$ denotes the dual space of the Sobolev space $W^{1,p'}(\Omega'')$ with $1/p+1/p'=1$. The constant $C_{p}$ depends on $r_0$ and a choice of $\Omega''$ but is independent of $\eta\geq 1$ and $r>0$ satisfying $(\eta+2)r\leq r_0$. The external forces $h$ and $g$ contain error terms appearing in the cut-off procedure and are explicitly given by 
\begin{equation*}
h=f\theta_{0}-2\nabla v\nabla\theta_{0}-v\Delta\theta_{0}+(q-q_c)\nabla \theta_{0},\quad  g=v\cdot\nabla \theta_{0}.  \tag{1.8}
\end{equation*}       
\noindent
(ii) (Error estimates) A key step is to estimate the error terms of the  pressure such as $(q-q_c)\nabla \theta_0$. We here simplify the description by disregarding the terms related to $g$ in order to describe the essence of the proof. We will give precise estimates for the terms related to $g$ in Section 3. Now, the error terms related to $h$ supported in $\Omega'$ are estimated in the form 
\begin{equation*}
||h||_{L^{p}(\Omega')}
\leq Cr^{n/p}\Bigg((\eta+1)^{n/p}||f||_{L^{\infty}(\Omega)}
+(\eta+1)^{-(1-n/p)}\Big(r^{-2}||v||_{L^{\infty}(\Omega)}+r^{-1}|| \nabla v||_{L^{\infty}(\Omega)}\Big)\Bigg).   \tag{1.9}
\end{equation*}
If we disregard the term $(q-q_{c})\nabla\theta_{0}$ in $h$, the estimates (1.8) easily follows by using the estimates of the cut-off function $\theta_{0}$, i.e., $||\theta_{0}||_{\infty}+(\eta+1)r||\nabla \theta_{0}||_{\infty}+(\eta+1)^{2}r^{2}||\nabla^2\theta_0||_{\infty}\leq K$ with some constant $K$. We invoke the estimate (1.5) in order to estimate the pressure term by velocity term through the \textit{Poincar\'{e}-Sobolev-type inequality}:   
\begin{equation*}
||\varphi-(\varphi)||_{L^{p}(\Omega_{x_0,s})}\leq C{s}^{n/p}||\nabla \varphi||_{L^{\infty}_{d}(\Omega)}\quad \textrm{for all}\ \varphi \in \hat{W}^{1,\infty}_{d}(\Omega),   \tag{1.10}
\end{equation*}
with some constant $C$ independent of $s\leq r_0$, where $(\varphi)$ denotes the mean value of $\varphi$ in $\Omega_{x_0,s}$ and $\hat{W}^{1,\infty}_{d}(\Omega)=\{\varphi\in L^{1}_{\textrm{loc}}(\bar\Omega)\ |\ \nabla \varphi\in L^{\infty}_{d}(\Omega) \}$. We prove the inequality (1.10) in Section 2. By taking $q_c=(q)$ and applying (1.10) for $\varphi =q$ and $s=(\eta+1)r$, we obtain the estimate (1.9) via (1.5). \\
\noindent     
(iii) (Interpolation) Once we establish the error estimates for $h$ and $g$, it is easy to obtain the estimate (1.4) by applying the interpolation inequality,
\begin{equation*}
||\varphi||_{L^{\infty}(\Omega_{x_0,r})}\leq
 C_{I}r^{-n/p} \left(||\varphi||_{L^{p}(\Omega_{x_0,2r})}+r||\nabla \varphi||_{L^{p}(\Omega_{x_0,2r})}\right)  \quad\textrm{for}\ \varphi\in W^{1,p}_{\textrm{loc}}(\bar\Omega),      \tag{1.11}
\end{equation*}
and $x_0\in \Omega$, $r\leq r_0$. The constant $C_I$ is independent of $x_0$ and $r$. We give a proof for the inequality (1.11) in Appendix A. Applying the above inequality for $\varphi=u$ and $\nabla u$ and now taking $r=|\lambda|^{-1/2}$, we obtain the estimate for $M_{p}(v,q)(x_0,\lambda)$ with the parameters $\eta$ of the form, 
\begin{equation*}
M_{p}(v,q)(x_0,\lambda)\leq C\left((\eta+1)^{n/p}||f||_{L^{\infty}(\Omega)}+(\eta+1)^{-(1-n/p)}||M_{p}(v,q)||_{L^{\infty}(\Omega)}(\lambda)\right)  \tag{1.12}
\end{equation*}
for some constant $C$ independent of $\eta$. Note that $r=|\lambda|^{-1/2}$ and $\eta$ satisfy $r(\eta+2)\leq r_0$ for all $\eta\geq 1$ and $|\lambda|\geq \delta_{\eta}$ where $\delta_{\eta}=(\eta+2)^{2}/r_0^{2}$. The second term in the right-hand side is absorbed into the left-hand side by letting $\eta$ sufficiently large provided $p>n$.\\

Actually, in the procedure (ii) we take $q_{c}$ by the mean value of $q$ in $\Omega_{x_0,(\eta+2)r}$ and apply the inequality (1.10) for $s=(\eta+2)r$ since we estimate $|\lambda|||g||_{W^{-1,p}_{0}}$. By using the equation (1.1), we reduce the estimate of $|\lambda|||g||_{W^{-1,p}_{0}}$ to the $L^{\infty}$-estimate for the boundary value of $q-q_c$ on $\partial\Omega'$. In order to estimate $||q-q_c||_{L^{\infty}(\Omega')}$, we use a uniformly local $L^{p}$-norm bound for $\nabla q$ besides the sup-bound for $\nabla v$. This is the reason why we need the norm $||M_{p}(v,q)||_{L^{\infty}(\Omega)}(\lambda)$ in the right-hand side of (1.12). For general elliptic operators, the estimate (1.12) is valid  without invoking the uniformly local $L^{p}$-norm bound for second derivatives of solutions.

This paper is organized as follows. In Section 2, we prove the inequality (1.10) for uniformly $C^{2}$-domains. More precisely, we prove stronger estimates than (1.10) both interior and up to boundary $\Omega_{x_0,s}$ of $\Omega$. In Section 3, we first prepare the estimates for $h$ and $g$ and then prove the a priori estimate (1.4) (Theorem 1.1). After proving Theorem 1.1, we also discuss the estimates (1.4) under the Robin boundary condition. \\

\begin{rems}
(i) After this work was completed, it turned out that a perturbed half space of class $C^{3}$ for $n\geq 3$ was also strictly admissible \cite[Theorem 2.3.3]{A}. Furthermore, the approximation for $f\in L^{\infty}_{\sigma}$ by $\{f_m\}_{m=1}^{\infty}\subset C^{\infty}_{c,\sigma}$, i.e., $f_m\to f $ a.e. in $\Omega$ and $||f_m||_{\infty}\leq C||f_{\infty}||_{\infty}$ with $C=C_{\Omega}$, was proved for a perturbed half space, $n\geq 2$, in \cite[Lemma 4.3.10]{A}. Thus, our main theorems (Theorem 1.1--Theorem 1.3) are also valid for a perturbed half space with $C^{3}$-boundary for $n\geq 3$.\\
\noindent
(ii) After this work was completed, the authors were informed of the recent paper by Kenig et al. \cite{KLS}, where the estimate (1.6) was proved for $C^{1,\gamma}$-bounded domains by estimating the Green function for the Neumann problem (independently of the works \cite{AG1}, \cite{AG2}, \cite{A}). If one applies their result, one is able to reduce the regularity assumption of  boundaries from $C^3$ to $C^2$ at least for bounded domains; the assertion of Theorem 1.3  is still valid for bounded domains with $C^2$-boundary. For elliptic operators, the estimate corresponding to (1.4) is valid with $C^{1,1}$-boundary. However, we use the $C^{2}$-regularity in the proof of the inequality (1.10). Note that $C^{1,1}$-boundary is sufficient for the $L^{p}$-estimate of the Stokes equations (1.7); see \cite{FS1}.\\
\noindent
(iii) After this work was completed, it was proved in \cite{HM} that $e^{tA}$ is a bounded analytic semigroup on $L^{\infty}_{\sigma}(\Omega)$, provided that $\Omega$ is an exterior domain with smooth boundary.
\end{rems}

%Section2
\section{Poincar\'{e}-Sobolev-type inequality }

In this section, we prove the inequality (1.10) in a uniformly $C^{2}$-domain. We start with the Poincar\'{e}-Sobolev-type inequality in a bounded domain $D$ and observe the compactness of the  embedding from $\hat{W}^{1,\infty}_{d}(D)$ to $L^{p}(D)$ which is the key in proving the inequality (1.10) by \textit{reductio ad absurdum}.         

\subsection{Curvilinear coodinates}

Let $D$ be a bounded domain in $\mathbf{R}^{n}, n\geq 2$ and $p\in [1,\infty)$. We prove an inequality of the form,  
 \begin{equation*}
||\varphi-(\varphi)||_{L^{p}(D)}\leq C||\nabla \varphi||_{L^{\infty}_{d}(D)}\quad \textrm{for}\ \varphi\in \hat{W}^{1,\infty}_{d}(D)  \tag{2.1}
\end{equation*}
where $(\varphi)$ denotes the mean value of $\varphi$ in $D$, i.e., $(\varphi)=\fint_{D}\varphi dx$. If we replace the norm $||\nabla \varphi||_{L^{\infty}_{d}(D)}$ by the $L^{p}$-norm $||\nabla \varphi||_{L^{p}(D)}$, the estimate (2.1) is nothing but the Poincar\'{e} inequality \cite[5.8.1]{E}.  We observe that the boundedness of $||\nabla \varphi||_{L^{\infty}_{d}(\Omega)}$ implies $L^{p}$-integrability of $\varphi$ in $D$ even if $\nabla \varphi$ is not in $L^{p}(D)$. For example, when $D=B_{0}(1)$, $\varphi(x)=\log{(1-|x|)}$ is in $L^{p}$ although $|\nabla \varphi(x)|=d_{D}(x)^{-1}$ is not for any $p\in [1,\infty)$. Since the space $\hat{W}^{1,\infty}_{d}$ is compactly embedded to the space $C(\bar{D'})$ for each subdomain $D'$ of $D$ with $\bar{D'}\subset D$, we shall show a pointwise upper bound for $\varphi$ near $\partial D'$ by an $L^{p}$-integrable function to conclude that the space $\hat{W}^{1,\infty}_{d}(D)$ is compactly embedded to $L^{p}(D)$ by the dominated convergence theorem. We estimate $\varphi\in \hat{W}^{1,\infty}_{d}(D)$ near $\partial D$ directly by using the curvilinear coordinates. Here, for a domain $\Omega, \partial\Omega\neq \emptyset$, we say that $\partial \Omega$ is $C^{k}$ if for each $x_{0}\in \partial \Omega$, there exists constants $\alpha,\beta$ and $C^{k}$-function $h$ of $n-1$ variables $y'$ such that (up to rotation and translation if necessary) we have  
\begin{align*} 
&U(x_0) \cap \Omega=\bigl\{(y',y_n) \bigm| h(y')<y_n<h(y')+\beta,\ |y'|<\alpha\bigr\},   \\ 
&U(x_0) \cap \partial\Omega=\bigl\{(y',y_n) \bigm| y_n=h(y'),|y'|<\alpha\bigr\},\\
&\sup\limits_{|l|\leq k, |y'|<\alpha}\bigl|\partial^{l}_{y'} h(y')\bigr| \leq K,\  \nabla 'h(0)=0,\ h(0)=0,
\end{align*}
with the constant $K$ and the neighborhood of $x_0$, $U(x_0)=U_{\alpha,\beta,h}(x_0)$, i.e.,
\begin{equation*} 
U_{\alpha,\beta,h}(x_0)=\bigl\{(y',y_n) \in \mathbf{R}^n \bigm| h(y')-\beta<y_n<h(y')+\beta, |y'|<\alpha\bigr\}.
\end{equation*}
Here, $\partial_x^l=\partial_{x_1}^{l_1} \cdots \partial_{x_n}^{l_n}$ for  a multi-index $l=(l_1, \ldots, l_n)$ and $\partial_{x_j}=\partial/\partial x_j$ as usual and $\nabla '$ denotes the gradient in $\mathbf{R}^{n-1}$. Moreover, if we are able to take uniform constants $\alpha,\beta, K$ independent of each $x_0\in \partial\Omega$, we call $\Omega$ uniformly $C^{k}$-domain of type $(\alpha,\beta,K)$ as defined in \cite[I.3.2]{Sh}.\\

We estimate $\varphi\in \hat{W}^{1,1}_{d}(\Omega)$ along the boundary using the curvilinear coordinates. 

%Proposition 2.1
\begin{prop}
Let $D$ be a bounded domain with $C^{k}$-boundary, $k\geq 2$. Let $\Gamma=\{x\in \partial D \ |\ x=(x',h(x')),|x'|<\alpha'\}$ be a neighborhood of $x_0\in \partial D$.\\ 
(i) There exists positive constants $\mu$ and $\alpha'$ such that $(\gamma,d)\mapsto X(\gamma,d)=\gamma+dn_{D}(\gamma)$ is a  $C^{k-1}$ diffeomorphism from $\Gamma\times (0,\mu)$ onto 
\begin{equation*}
{\cal{N}^{\mu}}(\Gamma)=\{X(\gamma,d)\in U(x_0)\ |\ (\gamma,d)\in \Gamma \times (0,\mu)\},
\end{equation*}
i.e., $x\in {\cal{N}}^{\mu}(\Gamma)$ has a unique projection to $\partial D$ denoted by $\gamma(x)\in \partial D$ such that 
\begin{equation*}
(\gamma(x), d_{D}(x))=X^{-1}(x)\quad \textrm{for}\ x\in {\cal{N}^{\mu}}(\Gamma).
\end{equation*}

\noindent
(ii) There exists a constant $C_1$ such that for any $x_1\in \overline{{\cal{N}^{\mu}}(\Gamma)}$ and $r_1>0$ satisfying $D_{x_1,r_1}=B_{x_1}(r_1)\cap D\subset {\cal{N}^{\mu}}(\Gamma)$, 
\begin{equation*}
|\varphi(x)-\varphi(y)|\leq C_1\left(\left|\log{\frac{d_{D}(x)}{d_{D}(y)}}\right|+\frac{|\gamma(x)-\gamma(y)|}{\max\{d_{D}(x), d_{D}(y)\}} \right)\sup_{z\in D_{x_1,r}}d_{D}(z)|\nabla \varphi(z)| \quad  \textrm{for}\ x,\ y\in D_{x_1,r_1}  
\end{equation*}
and $\varphi\in \hat{W}^{1,\infty}_{d}(D)$.
\end{prop}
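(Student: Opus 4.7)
The plan is to establish (i) by the classical tubular neighborhood construction and (ii) by integrating the pointwise bound $|\nabla\varphi(z)|\le M/d_D(z)$, where $M=\sup_{z\in D_{x_1,r_1}}d_D(z)|\nabla\varphi(z)|$, along a three-leg path in the curvilinear coordinates supplied by (i). For (i), the $C^k$-regularity of $\partial D$ with $k\ge 2$ makes $n_D$ a $C^{k-1}$ unit vector field on $\Gamma$, so $X(\gamma,d)=\gamma+d\,n_D(\gamma)$ is itself $C^{k-1}$, with Jacobian at $d=0$ equal to the identity on $T_\gamma\partial D$ direct-summed with $n_D(\gamma)$, hence non-degenerate. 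After possibly shrinking $\alpha'$, compactness on $\overline\Gamma$ combined with the inverse function theorem yields a uniform $\mu_0>0$ on which $X$ is locally $C^{k-1}$-invertible. Global injectivity on $\Gamma\times(0,\mu)$ for some $\mu\le\mu_0$ sufficiently small (depending on the $C^2$-modulus of $h$) follows because an equality $X(\gamma_1,d_1)=X(\gamma_2,d_2)$ with $\gamma_1\neq\gamma_2$ would force $|\gamma_1-\gamma_2|\le d_1+d_2\le 2\mu$, while the curvature bound at that scale forces $n_D(\gamma_1)$ and $n_D(\gamma_2)$ to be too close to parallel, a contradiction for $\mu$ small. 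The same curvature argument identifies $\gamma$ as the unique foot of perpendicular from $X(\gamma,d)$, so that $X^{-1}(x)=(\gamma(x),d_D(x))$.

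For (ii), fix $x,y\in D_{x_1,r_1}$ and set $d^\ast=\max\{d_D(x),d_D(y)\}$. In curvilinear coordinates, concatenate $\Pi_1$, the normal segment $t\mapsto X(\gamma(x),t)$ for $t$ between $d_D(x)$ and $d^\ast$; $\Pi_2$, a minimizing tangential curve on the parallel hypersurface $\{d_D=d^\ast\}$ from $X(\gamma(x),d^\ast)$ to $X(\gamma(y),d^\ast)$; and $\Pi_3$, the normal segment $t\mapsto X(\gamma(y),t)$ for $t$ between $d^\ast$ and $d_D(y)$. By the choice of $d^\ast$, exactly one of $\Pi_1,\Pi_3$ is trivial. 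Integrating $|\nabla\varphi(z)|\le M/d_D(z)$ along these legs produces the contributions $M\log(d^\ast/d_D(x))$ and $M\log(d^\ast/d_D(y))$ from $\Pi_1$ and $\Pi_3$, whose sum collapses to $M\bigl|\log(d_D(x)/d_D(y))\bigr|$, together with a contribution $CM|\gamma(x)-\gamma(y)|/d^\ast$ from $\Pi_2$, where $C$ absorbs the Euclidean-to-curvilinear metric distortion (controlled by $K$ and $\mu$). The fundamental theorem of calculus applied along $\Pi=\Pi_1\cup\Pi_2\cup\Pi_3$ then yields the claimed inequality.

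The principal subtlety is that the intermediate point $X(\gamma(x),d^\ast)$ or $X(\gamma(y),d^\ast)$ need not lie in $B_{x_1}(r_1)$, so $\Pi$ may exit $D_{x_1,r_1}$ while remaining in ${\cal N}^\mu(\Gamma)$. However, the normal displacement on $\Pi_1,\Pi_3$ is bounded by $|d_D(x)-d_D(y)|\le|x-y|\le 2r_1$ and the tangential displacement on $\Pi_2$ by $C|\gamma(x)-\gamma(y)|\le C|x-y|$, so $\Pi$ lies in a bounded curvilinear enlargement of $D_{x_1,r_1}$ inside ${\cal N}^\mu(\Gamma)$. This gap is closed either by slightly shrinking $r_1$ relative to $\mu$, or by absorbing the sup over the enlarged path-set into $C_1$; the resulting constant depends only on $\alpha,\beta,K$ and $\mu$.
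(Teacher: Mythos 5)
Your proposal is correct and follows essentially the same route as the paper: part (i) via the inverse function theorem applied to $X(\gamma,d)=\gamma+d\,n_D(\gamma)$, and part (ii) by integrating $|\nabla\varphi|\le M/d_D$ along a normal segment up to the larger depth (yielding the logarithmic term) followed by a tangential curve at constant depth (yielding the $|\gamma(x)-\gamma(y)|/\max\{d_D(x),d_D(y)\}$ term); your three-leg path degenerates to the paper's two-leg path since one normal leg is trivial. Your remark that the connecting path may leave $D_{x_1,r_1}$ while staying in ${\cal N}^{\mu}(\Gamma)$ is a fair observation about a point the paper glosses over, and your fix is adequate.
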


\begin{proof}
The assertion (i) is based on the inverse function theorem \cite[Lemma 4.4.7]{KP}. We shall prove the second assertion (ii). We take points $x,y\in D_{x_1,r_1}$ for $x_{1}\in \overline{{\cal{N}^{\mu}}(\Gamma)}$ and $r_1>0$ satisfying $D_{x_1,r_1}\subset {\cal{N}}^{\mu}(\Gamma)$. We may assume $d_{D}(y)=d(y)>d(x)$. By setting $z=X(\gamma (x), d(y))$ we estimate 
\begin{equation*}
|\varphi(x)-\varphi(y)|\leq |\varphi(x)-\varphi(z)|+|\varphi(z)-\varphi(y)|.
\end{equation*}
We connect $x$ and $z$ by the straight line to estimate 
\begin{equation*}
\begin{split}
|\varphi(x)-\varphi(z)|
&=\left|\int_{0}^{1}\frac{d}{dt}\varphi(X(\gamma (x), td(x)+(1-t)d(y)))dt\right|\\
&=\left|\int_{0}^{1}(d(y)-d(x))(\nabla \varphi)(X(\gamma (x), td(x)+(1-t)d(y))\cdot n_{D}(\gamma (x))dt\right|\\
&\leq (d(y)-d(x))\int_{0}^{1}\frac{dt}{t(d(x)-d(y))+d(y)}\sup_{z\in D_{x_1,r}}d(z)|\nabla \varphi(z)|\\
&=\left|\log{\frac{d(y)}{d(x)}}\right| \sup_{z\in D_{x_1,r}}d(z)|\nabla \varphi(z)|.
\end{split}
\end{equation*}
It remains to estimate $|\varphi(z)-\varphi(y)|.$ We connect $z$ and $y$ by the curve 
\begin{equation*}
C_{z,y}=\{X(\gamma(t),d(y))\ |\ \gamma(t)=(\gamma'(t),h(\gamma'(t))),\ \gamma'(t)=t\gamma'(x)+(1-t)\gamma'(y),\ 0\leq t\leq 1 \},
\end{equation*}
where $\gamma'$ denotes the $n-1$ variables of $\gamma$. We then estimate  
\begin{equation*}
\begin{split}
|\varphi(z)-\varphi(y)|
&=\left|\int_{0}^{1}\frac{d}{dt}\varphi(X(\gamma(t),d(y)))dt\right|\\
&=\left|\int_{0}^{1}\frac{d \gamma}{dt}(t)(1+d(y)\nabla_{\partial D}n_{D}(\gamma (t)))\nabla\varphi(X(\gamma(t),d(y)))dt\right|\\
&\leq C(1+\mu K)\frac{|\gamma(x)-\gamma(y)|}{d(y)}\sup_{z\in D_{x_1,r_1}}d(z)|\nabla \varphi(z)|,
\end{split}
\end{equation*}
since $|d\gamma (t)/dt|\leq C|\gamma(x)-\gamma(y)|$ and $|\nabla_{\partial D}n_{D}|\leq K$ with a constant $C$ depending on $K$. The assertion (ii) thus follows. 
\end{proof}

%Remark 2.2
\begin{rems}\normalfont
(i) We observe from the second assertion that $\varphi\in \hat{W}^{1,\infty}_{d}(D)$ is bounded from above by an $L^{p}$-integrable function for all $p\in [1,\infty)$ near $\partial D$, i.e., for each fixed $y\in D_{x_1,r_1}$ such that $d_{D}(y)\geq \delta$ we have
\begin{equation*}
|\varphi(x)|\leq C_2(|\log{d_{D}(x)}|+1)\left(\sup_{z\in D_{x_1,r_1}}d_{D}(z)|\nabla \varphi(z)|\right)+|\varphi(y)|\quad\textrm{for}\ x\in D_{x_1,r_1}    \tag{2.2}
\end{equation*}
with a constant $C_2$ depending on $\mu, \delta$. 

\noindent 
(ii) Note that Proposition 2.1 is also valid for a uniformly $C^{k}$-domain $\Omega$ of type $(\alpha,\beta,K)$, i.e., there exists constants $\mu, \alpha'$, depending only on $\alpha,\beta,K$, such that for each $x_0\in \partial\Omega$ the assertions (i) and (ii) hold.  The above constants $C_1$ and $C_2$ are depending only on $\alpha,\beta,K$ and $\delta$. In the sequel, we will apply Proposition 2.1 to a uniformly $C^{2}$-domain to prove the inequality (1.10).   
\end{rems}

The estimate (2.2) implies the compactness from $\hat{W}^{1,\infty}_{d}(D)$ to $L^{p}(D)$.

%Lemma 2.3
\begin{lem}
Let $D$ be a bounded domain in $\mathbf{R}^{n}, n\geq 2$, with $C^{2}$-boundary. Then, there exists a constant $C_{D}$ such that the estimate (2.1) holds for all $\varphi \in \hat{W}^{1,\infty}_{d}(D)$. Moreover, the space $\hat{W}^{1,\infty}_{d}(D)$ is compactly embedded into $L^{p}(D)$. 
 \end{lem}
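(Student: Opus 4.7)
My plan is to establish the compact embedding first, since the Poincar\'e--Sobolev-type inequality (2.1) then follows from it by the standard contradiction argument.

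For the compact embedding $\hat{W}^{1,\infty}_{d}(D)\hookrightarrow L^{p}(D)$, I would take a sequence $\{\varphi_{k}\}$ with $(\varphi_{k})=0$ and $\|\nabla \varphi_{k}\|_{L^{\infty}_{d}(D)}\leq 1$ and extract an $L^{p}$-convergent subsequence. Covering $\partial D$ by finitely many boundary patches $\Gamma_{j}$ to which Proposition 2.1 applies, set $D_{\epsilon}:=\{x\in D : d_{D}(x)>\epsilon\}$ for $\epsilon>0$. On $\overline{D_{\epsilon}}$ each $\varphi_{k}$ satisfies $|\nabla \varphi_{k}|\leq \epsilon^{-1}$, so the family is equicontinuous with respect to the intrinsic distance of $D_{\epsilon}$, which is comparable to the Euclidean one for $\epsilon$ small by the $C^{2}$-regularity of $\partial D$. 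Combined with a uniform bound $|\varphi_{k}(y_{0})|\leq C_{\epsilon}$ at one interior point $y_{0}\in D_{\epsilon}$ (obtained from the mean-zero normalization together with the pointwise boundary-layer control supplied by (2.2)), Arzel\`a--Ascoli produces a subsequence converging uniformly on $\overline{D_{\epsilon}}$. A diagonal extraction as $\epsilon\to 0$ then yields a single subsequence converging almost everywhere on $D$.

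The remaining step is to promote a.e.\ convergence to $L^{p}$-convergence by constructing a dominating function. Applying (2.2) with the reference point $y_{0}$ fixed above, for every $x\in D\setminus D_{\epsilon}$ lying in a boundary patch we have
$$|\varphi_{k}(x)|\leq C_{2}(|\log d_{D}(x)|+1)+|\varphi_{k}(y_{0})|,$$
and the right-hand side is a fixed $L^{p}(D)$-function since $|\log d_{D}(\cdot)|^{p}$ is integrable on a bounded $C^{2}$-domain. Dominated convergence then gives $\varphi_{k}\to \varphi$ in $L^{p}(D)$, completing the compact embedding. Assertion (2.1) follows immediately by contradiction: if it failed, there would exist $\{\varphi_{k}\}$ with $(\varphi_{k})=0$, $\|\varphi_{k}\|_{L^{p}(D)}=1$ and $\|\nabla \varphi_{k}\|_{L^{\infty}_{d}(D)}\to 0$, and an $L^{p}$-convergent subsequence would produce a limit $\varphi$ with $\nabla \varphi=0$ distributionally, hence $\varphi$ constant on each component of $D$; the mean-zero condition would then force $\varphi\equiv 0$, contradicting $\|\varphi\|_{L^{p}}=1$.

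The main obstacle I expect is the uniform pointwise bound $|\varphi_{k}(y_{0})|\leq C_{\epsilon}$, since $\hat{W}^{1,\infty}_{d}(D)$ supplies no direct pointwise control on $\varphi_{k}$ itself. To obtain it one couples the mean-zero normalization with (2.2): the Lipschitz estimate on $D_{\epsilon}$ gives $\sup_{D_{\epsilon}}\varphi_{k}-\inf_{D_{\epsilon}}\varphi_{k}\leq C/\epsilon$, while (2.2) gives $\int_{D\setminus D_{\epsilon}}|\varphi_{k}|\leq C+|D\setminus D_{\epsilon}|\,|\varphi_{k}(y_{0})|$; using $\int_{D}\varphi_{k}=0$ and choosing $\epsilon$ small enough that $|D\setminus D_{\epsilon}|/|D_{\epsilon}|$ is below a fixed threshold (possible because $|D\setminus D_{\epsilon}|=O(\epsilon)$ by the $C^{2}$-regularity of $\partial D$) allows one to absorb the $|\varphi_{k}(y_{0})|$ term on the right and deduce the required bound.
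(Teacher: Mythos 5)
Your proof is correct and follows essentially the same route as the paper: both rest on Proposition 2.1 (the logarithmic bound (2.2)) to dominate the sequence near $\partial D$, on local uniform convergence in the interior via the Lipschitz bound $|\nabla\varphi_k|\le \epsilon^{-1}$ on $D_\epsilon$, on the dominated convergence theorem, and on a contradiction argument exploiting the mean-zero normalization. The only difference is one of ordering — you establish the compact embedding first and deduce (2.1) from it, and in doing so you supply the uniform anchor bound $|\varphi_k(y_0)|\le C_\epsilon$ explicitly, a point the paper leaves implicit since its contradiction sequence already carries the normalization $\|\phi_m\|_{L^p(D)}=1$.
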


\begin{proof}
We argue by contradiction. Suppose that the estimate (2.1) were false for any choice of the constant $C$. Then, there would exist a sequence of functions $\{\varphi_{m}\}_{m=1}^{\infty}\subset \hat{W}^{1,\infty}_{d}(D)$ such that 
\begin{equation*}
||\varphi_{m}-(\varphi_{m})||_{L^{p}(D)}>m||\nabla \varphi_m||_{L^{\infty}_{d}(D)},\quad m\in\mathbf{N}.
\end{equation*} 
We may assume $(\varphi_m)=0$ by replacing $\varphi_m$ to $\varphi_m-(\varphi_m)$. We divide $\varphi_m$ by $M_m=||\varphi_m||_{L^{p}(D)}$ to get a sequence of functions $\{\phi_m\}_{m=1}^{\infty}$, $\phi_m=\varphi_{m}/M_m$ such that 
\begin{align*}
&||\nabla \phi_m||_{L^{\infty}_{d}(D)}<1/m,\\
&||\phi_m||_{L^{p}(D)}=1 \quad \textrm{with}\ (\phi_m)=0.
\end{align*}  
We now prove the compactness of $\{\phi_{m}\}_{m=1}^{\infty}$ in $L^{p}(D)$. Since $||\nabla \phi_m||_{L^{\infty}_{d}(D)}$ is bounded, there exists a subsequence still denoted by $\{\phi_m\}_{m=1}^{\infty}$ such that $\phi_{m}$ converges to a limit $\bar\phi$ locally uniformly in $D$. By Proposition 2.1, in particular, the estimate (2.2) implies that $\phi_m$ is uniformly bounded from above by an $L^{p}$-integrable function near $\partial D$. The dominated convergence theorem implies that 
\begin{equation*}
\phi_m\to \bar\phi\quad \textrm{in}\ L^{p}(D)\quad \textrm{as}\ \ m\to\infty .
\end{equation*} 
Since $\nabla \phi_m(x)\to 0$ as $m\to \infty$ for each $x\in D$ and $||\bar\phi||_{L^{p}(D)}=1$, $\bar\phi$ is a non-zero constant which contradicts the fact that $(\bar\phi)=0$. We reached a contradiction.\\
For the compactness of $\{\phi_m\}_{m=1}^{\infty}$ in $L^{p}(D)$, we here only invoke the bound for $||\nabla \phi_{m}||_{L^{\infty}_{d}(D)}$. This means that the embedding from $\hat{W}^{1,\infty}_{d}(D)$ into $L^{p}(D)$ is compact. The proof is now complete.
\end{proof}

%Subsection 2.2
\subsection {Estimates near the boundary}

We now prove the inequality (1.10) for uniformly $C^{2}$-domains $\Omega$. When the ball $B_{x_0}(r)$ locates in the interior of $\Omega$, i.e.,  $\Omega_{x_0,r}=B_{x_0}(r)$, applying (2.1) to $\varphi_{r}(x)=\varphi(x_0+rx)$ in $D=B_{0}(1)$ implies the estimate  
\begin{equation*}
||\varphi-(\varphi)||_{L^{p}(\Omega_{x_0,r})}\leq Cr^{n/p}\sup_{z\in \Omega_{x_0,r}}d_{\Omega_{x_0,r}}(z)|\nabla \varphi(z)|,\quad r>0.   \tag{2.3}
\end{equation*}  
Since $d_{\Omega_{x_0,r}}(x)\leq d_{\Omega}(x)$ for $x\in \Omega_{x_0,r}$, the assertion (1.10) follows. However, if $B_{x_0}(r)$ involves $\partial\Omega$, the boundary of $\Omega_{x_0,r}$ may not have $C^{1}$-regularity.
We thus prove
\begin{equation*}
||\varphi-(\varphi)||_{L^{p}(\Omega_{x_0,r})}\leq Cr^{n/p}\sup_{z\in \Omega_{x_0,r}}d_{\Omega}(z)|\nabla \varphi(z)| \quad \textrm{for}\ \varphi\in \hat{W}^{1,\infty}_{d}(\Omega)       \tag{2.4}
\end{equation*}
for $x_0\in \Omega$ and $r>0$ satisfying $d_{\Omega}(x_0)<r$, which is weaker than (2.3).
 
%Proposition 2.4
\begin{prop}
Let $\Omega$ be a uniformly $C^{2}$-domain. There exists constants $r_0$ and $C$ such that for $x_0\in \Omega$ and $r\leq r_0$ satisfying $d_{\Omega}(x_0)<r$, the estimate (2.4) holds for all $\varphi\in \hat{W}^{1,\infty}_{d}(\Omega)$ with a constant $C$ independent of $x_0$ and $r$.  
\end{prop}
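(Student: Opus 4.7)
The plan is a contradiction argument via rescaling, reducing the problem to a limit cap in a half-space, on which Proposition 2.1, the pointwise bound (2.2), and the argument behind Lemma 2.3 furnish the desired contradiction.

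Suppose (2.4) fails. Then one extracts sequences $x_m\in\Omega$, $r_m\le r_0$ with $d_\Omega(x_m)<r_m$, and $\varphi_m\in\hat W^{1,\infty}_d(\Omega)$ with $(\varphi_m)_{\Omega_{x_m,r_m}}=0$ such that, after normalizing so that $\|\varphi_m\|_{L^p(\Omega_{x_m,r_m})}=r_m^{n/p}$, one has $\sup_{\Omega_{x_m,r_m}}d_\Omega|\nabla\varphi_m|<1/m$. Pick $y_m\in\partial\Omega$ closest to $x_m$, so $|y_m-x_m|<r_m$, and rescale by setting $\tilde\varphi_m(\xi)=\varphi_m(y_m+r_m\xi)$ on $\tilde\Omega_m=(\Omega-y_m)/r_m$. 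The weighted seminorm $d|\nabla|$ is scale invariant and the $L^p$ norm acquires an $r_m^{n/p}$, so on $\tilde D_m=(\Omega_{x_m,r_m}-y_m)/r_m\subset B_0(2)$ one obtains
\[
\|\tilde\varphi_m\|_{L^p(\tilde D_m)}=1,\qquad \sup_{\tilde D_m} d_{\tilde\Omega_m}|\nabla\tilde\varphi_m|<1/m,\qquad (\tilde\varphi_m)_{\tilde D_m}=0.
\]
Using the uniform $C^2$ structure of $\Omega$, the boundary $\partial\tilde\Omega_m$ is (after a suitable rotation) the graph $\xi_n=h_m(r_m\xi')/r_m$ whose $C^2$ norm is $\le r_m K\le r_0 K$; for $r_0$ small enough this gives uniform control on the $C^2$ data of $\tilde\Omega_m$, so that Proposition 2.1 and the pointwise bound (2.2) apply to $\tilde\Omega_m$ with constants $\mu,C_1,C_2$ independent of $m$.

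After rotating to a fixed frame (so that all $n_\Omega(y_m)$ point in one direction) and extracting a subsequence, the rescaled graphs converge in $C^1_{\mathrm{loc}}$ to a Lipschitz graph $\tilde h_*$, the translated centres $(x_m-y_m)/r_m$ converge to some $\xi_*\in\overline{B_0(1)}$, and the cap domains $\tilde D_m$ converge in the Hausdorff sense to $\tilde D_*=B_{\xi_*}(1)\cap\{\xi_n>\tilde h_*(\xi')\}$ inside the limit domain $\tilde\Omega_*=\{\xi_n>\tilde h_*(\xi')\}$. The weighted gradient bound $d_{\tilde\Omega_m}|\nabla\tilde\varphi_m|\le 1/m$ delivers equicontinuity of $\{\tilde\varphi_m\}$ on compact subsets of the interior of $\tilde D_*$, while (2.2), anchored at an interior point whose value of $\tilde\varphi_m$ is kept bounded through the $L^p$-normalization and the mean-zero constraint by a Chebyshev-type argument, dominates $|\tilde\varphi_m|$ near $\partial\tilde\Omega_*$ by a fixed $L^p$-integrable function. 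Arzelà--Ascoli together with dominated convergence then extracts a further subsequence with $\tilde\varphi_m\to\tilde\varphi$ in $L^p(\tilde D_*)$. The limit $\tilde\varphi$ is weakly differentiable with $\nabla\tilde\varphi\equiv 0$, hence is constant on $\tilde D_*$, and has vanishing mean there, so $\tilde\varphi\equiv 0$, contradicting $\|\tilde\varphi\|_{L^p(\tilde D_*)}=1$.

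The main obstacle I expect is the \emph{uniform} pointwise control near the boundary: one must verify that the constants $\mu,C_1,C_2$ of Proposition 2.1 and Remark 2.2(i) can be chosen independently of $m$ when applied to the rescaled domains $\tilde\Omega_m$, and that the anchor value of $\tilde\varphi_m$ at some fixed interior point of $\tilde D_*$ is bounded uniformly in $m$ using only $\|\tilde\varphi_m\|_{L^p(\tilde D_m)}=1$ and $(\tilde\varphi_m)_{\tilde D_m}=0$. Once these uniformities are in hand, the closing step mirrors the contradiction proof of Lemma 2.3 verbatim.
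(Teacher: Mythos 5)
Your proposal is correct and follows essentially the same route as the paper: a contradiction argument with normalization, rescaling about the nearest boundary point so that the caps converge to a half-space cap, uniform application of Proposition 2.1 and the bound (2.2) to the rescaled domains (the paper's Remark 2.2(ii)), dominated convergence, and the observation that the limit is a constant of mean zero but unit $L^p$-norm. The only differences are cosmetic (rescaling about $y_m$ rather than $x_m$, allowing a Lipschitz limit graph where the paper notes $r_m\downarrow 0$ flattens it to a half-space), and your explicit Chebyshev argument for bounding the anchor value $|\tilde\varphi_m(y)|$ actually makes a step precise that the paper leaves implicit.
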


The inequality (1.10) easily follows from Proposition 2.4.

%Lemma 2.5
\begin{lem}
The inequality (1.10) holds for $\varphi\in \hat{W}^{1,\infty}_{d}(\Omega)$ for all $x_{0}\in \Omega$ and $r\leq r_0$ with a constant $C$ independent of $x_{0}$ and $r$. 
\end{lem}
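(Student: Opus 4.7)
The plan is to deduce Lemma 2.5 from Proposition 2.4 combined with the rescaled Poincaré inequality (2.3), by distinguishing the interior case from the boundary case.

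First I would fix $x_0 \in \Omega$ and $r \le r_0$, and split into two cases according to whether $d_\Omega(x_0) \ge r$ or $d_\Omega(x_0) < r$. In the interior case $d_\Omega(x_0) \ge r$ the ball $B_{x_0}(r)$ is contained in $\Omega$, so $\Omega_{x_0,r} = B_{x_0}(r)$. Here I would apply (2.3), which is obtained by rescaling $\varphi_r(x) = \varphi(x_0+rx)$ to the unit ball $D = B_0(1)$ and invoking (2.1) proved in Lemma 2.3. To conclude, I would use the elementary monotonicity $d_{B_{x_0}(r)}(z) \le d_\Omega(z)$ for every $z \in B_{x_0}(r)$ (since $B_{x_0}(r) \subset \Omega$ implies $\mathbf{R}^n\setminus \Omega \subset \mathbf{R}^n\setminus B_{x_0}(r)$), so that
\begin{equation*}
\sup_{z\in \Omega_{x_0,r}} d_{B_{x_0}(r)}(z)|\nabla\varphi(z)| \le \sup_{z\in\Omega} d_\Omega(z)|\nabla\varphi(z)| = \|\nabla\varphi\|_{L^\infty_d(\Omega)},
\end{equation*}
which yields (1.10) with a constant depending only on $n$ and $p$.

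In the boundary case $d_\Omega(x_0) < r$, I would apply Proposition 2.4 directly. This produces (2.4), and the same majorization $\sup_{z\in\Omega_{x_0,r}} d_\Omega(z)|\nabla\varphi(z)| \le \|\nabla\varphi\|_{L^\infty_d(\Omega)}$ immediately upgrades (2.4) to (1.10). Since the constant in Proposition 2.4 is uniform in $x_0$ and in $r \le r_0$, taking the maximum of the two constants from the two cases yields a single constant $C$ independent of $x_0$ and $r$, completing the proof.

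There is essentially no obstacle here: the lemma is a bookkeeping consequence of Proposition 2.4 and the rescaled interior estimate (2.3), once one observes that $d_\Omega$ dominates both $d_{B_{x_0}(r)}$ and the distance function used in (2.4). The only point requiring minor care is the case split at $d_\Omega(x_0) = r$, but both estimates remain valid at the threshold, so the two cases glue together without loss.
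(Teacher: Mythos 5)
Your argument is correct and is exactly the paper's proof: a case split at $d_\Omega(x_0)\ge r$ versus $d_\Omega(x_0)<r$, using the rescaled interior estimate (2.3) together with the monotonicity $d_{B_{x_0}(r)}\le d_\Omega$ in the first case and Proposition 2.4 in the second. You have merely spelled out the details (the domination of the distance functions and the uniformity of the constants) that the paper leaves implicit.
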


\begin{proof}
For $r<r_0$, combining (2.3) for $d_{\Omega}(x_{0})\geq r$ with (2.4) for $d_{\Omega}(x_{0})<r$, the assertion (1.10) follows. 
\end{proof}

\noindent
\textit{Proof of Proposition 2.4.}
We argue by contradiction. Suppose that the estimate (2.4) were false for any choice of constants $r_0$ and $C$. Then, there would exist a sequence of functions $\{\varphi\}_{m=1}^{\infty}\subset \hat{W}^{1,\infty}_{d}(\Omega)$ and a sequence of points $\{x_m\}_{m=1}^{\infty}\subset \Omega$ satisfying $d_{\Omega}(x_m)<r_m\downarrow 0$ such that 
\begin{equation*}
||\varphi_{m}-(\varphi_{m})||_{L^{p}(\Omega_{x_m,r_m})}>m{r_m}^{n/p}\sup_{z\in \Omega_{x_m,r_m}}d_{\Omega}(z)|\nabla \varphi_m(z)|, \quad m\in\mathbf{N}.
\end{equation*}   
Replacing $\varphi_m$ by $\varphi_m-(\varphi_{m})$ and dividing $\varphi_{m}$ by $r_{m}^{-n/p}||\varphi_m||_{L^{p}(\Omega_{x_m,r_m})}$ (still denoted by $\varphi_m$), we observe that $\varphi_m$ satisfies 
\begin{align*}
&\sup_{z\in \Omega_{x_m,r_m}}d_{\Omega}(z)|\nabla \varphi_m(z)|<1/m,\\
&{r_{m}}^{-n/p}||\varphi_m||_{L^{p}(\Omega_{x_m,r_m})}=1 \quad \textrm{with}\ (\varphi_m)=0.
\end{align*}
Since the points $\{x_m\}_{m=1}^{\infty}$ accumulate at the boundary $\partial\Omega$, we may assume by rotation and translation of $\Omega$ that $x_m=(0,d_{m})$ with $d_m=d_{\Omega}(x_m)$ which subsequently converges to the origin located on the boundary $\partial\Omega$. Here, the neighborhood of the origin is denoted by $\Omega_{\textrm{loc}}=U(0) \cap \Omega$ with constants $\alpha,\beta$ and $C^{2}$-function $h$, i.e.,  
\begin{equation*}
\Omega_{\textrm{loc}}=\{(x',x_n)\in \mathbf{R}^{n}_{+}\ |\ h(x')<x_n<h(x')+\beta,\ |x'|<\alpha \}. 
\end{equation*}   
We rescale ${\varphi}_{m}$ around the point $x_m$ by setting 
\begin{equation*}
\phi_{m}(x)={\varphi}_{m}(x_m+r_mx)\quad \textrm{for}\ x\in \Omega^{m},
\end{equation*}   
where $\Omega^{m}=\{x\in \mathbf{R}^{n}\ |\ x=(y-x_m)/r_m, y\in \Omega \}$ is the rescaled domain.
Since $c_m=d_m/r_m<1$, by taking a subsequence we may assume $\lim_{m\to \infty} c_m=c_0\leq1$. We then observe that the rescaled domain $\Omega^{m}$ expands to a half space $\mathbf{R}_{+,-c_0}^{n}=\{(x',x_n)\in \mathbf{R}^{n}\ |\ x_n>-c_0 \}$. In fact, the neighborhood $\Omega_{\textrm{loc}}\subset \Omega$ is rescaled to the domain,
\begin{equation*}
\Omega^{m}_{\textrm{loc}}=\left\{(x',x_n)\in \mathbf{R}^{n}\ \Bigg|\ \frac{1}{r_m}h(r_m x')-c_m<x_n<\frac{1}{r_m}h(r_m x')+\frac{\beta}{r_m},\ |x'|<\frac{\alpha}{r_m} \right\}
\end{equation*}    
which converges to $\mathbf{R}_{+,-c_0}^{n}$ by letting $m\to \infty$. Note that constants of uniformly regularity of $\partial\Omega_{m}$ are uniformly bounded under this rescaling procedure. Moreover, for any constants $\mu$ and $\alpha'$, the curvilinear neighborhood of the origin ${\cal{N}^{\mu}}(\Gamma)$ is in $\Omega^{m}_{\textrm{loc}}$ for sufficiently large $m\geq 1$, where $\Gamma=\Gamma_{\alpha'}(0)$ is the neighborhood of the origin on $\partial\Omega^{m}$. Then, the estimates for $\varphi_m$ are inherited to the estimates for $\phi_m$, i.e., 
\begin{align*}
&\sup_{z\in \Omega^{m}_{0,1}}d_{\Omega^{m}}(z)|\nabla \phi_m(z)|<1/m,\quad m\in \mathbf{N},\\
&||\phi_{m}||_{L^{p}(\Omega^{m}_{0,1})}=1\quad \textrm{with}\ 
(\phi_m)=\fint_{\Omega^{m}_{0,1}}\phi_{m}=0,
\end{align*}
where $\Omega^{m}_{0,1}=B_{0}(1)\cap\Omega^{m}$. From the above bound for $\nabla \phi_m$, there exists a subsequence still denoted by $\{\phi_{m}\}_{m=1}^{\infty}$ such that $\phi_m$ converges to a limit $\bar\phi$ locally uniformly in $(\mathbf{R}^{n}_{+,-c_0})_{0,1}=  \mathbf{R}^{n}_{+,-c_0}\cap B_{0}(1)$. \\
\noindent
We now observe the compactness of the sequence $\{\phi_m\}_{m=1}^{\infty}$ in $L^{p}((\mathbf{R}^{n}_{+,-c_0})_{0,1})$. By Remark 2.2 (ii), we apply Proposition 2.1 to $\Omega^{m}$ to get the estimate (2.2) with $x_{1}=0,r=1$ and a fixed $y\in \Omega^{m}_{0,1}$ satisfying $d_{\Omega^{m}}(y)\geq \delta$, i.e., 
\begin{equation*}
|\phi_m(x)|\leq C(|\log{d_{\Omega_m}(x)}|+1)\left(\sup_{z\in \Omega^{m}_{0,1}}d_{\Omega_m}(z)|\nabla \phi_m(z)|\right)+|\phi_m(y)|\quad \textrm{for}\ x\in \Omega^{m}_{0,1},    
\end{equation*}
for sufficiently large $m\geq 1$. Here, the constant $C$ is independent of $m\geq 1$. Since $\phi_m$ is uniformly bounded from above by an $L^{p}$-integrable function in $\Omega^{m}_{0,1}$, the dominated convergence theorem implies that $\phi_m$ converges to a limit $\bar\phi$ in $L^{p}((\mathbf{R}^{n}_{+,-c_0})_{0,1})$. Since $\nabla \phi_m(x)\to 0$ as $m\to \infty$ for each $x\in (\mathbf{R}^{n}_{+,-c_0})_{0,1}$ and $||\bar\phi||_{L^{p}((\mathbf{R}^{n}_{+,-c_0})_{0,1})}=1$, $\bar\phi$ is a non-zero constant which contradicts the fact that $(\bar\phi)=0$. We reached a contradiction and the proof is now complete.

%Section 3
\section{A priori estimates for the Stokes equations }

The goal of this section is to prove the a priori estimate (1.4) by using the inequality (1.10). A key step is to establish the estimates for $h$ and $g$ in the procedure (ii) as explained in the introduction. We first recall the $L^{p}$-estimates to the Stokes equations (1.7) and the interpolation inequality (1.11). Note that the constant $C_p$ in (1.7) depends on $r_0$ and $\Omega''$ but independent of parameters $\eta\geq 1$ and $r\leq r_0$ satisfying $(\eta+2)r\leq r_0$.

%Subsection 3.1
\subsection{$L^{p}$-estimates for localized equations }

Let $\Omega''$ be a bounded domain with $C^{2}$-boundary. For the a priori estimate (1.4), we invoke the $L^{p}$-estimates (1.7) to the  Stokes resolvent equations with inhomogeneous divergence condition,
\begin{align*}
\lambda u-\Delta u+\nabla p&=h\quad \textrm{in}\ \Omega'',\tag{3.1}\\
\textrm{div}\ u&=g\quad \textrm{in}\ \Omega'',\tag{3.2}\\
 u&=0\quad \textrm{on}\ \partial\Omega'',\tag{3.3}
\end{align*}  
for $h\in L^{p}(\Omega''), g\in W^{1,p}(\Omega'')\cap L^{p}_{\textrm{av}}(\Omega'')$ and $\lambda\in \sum_{\vartheta,0}$ where $\vartheta\in (\pi/2,\pi)$. Here, $L^{p}_{\textrm{av}}(\Omega'')$ denotes the space of all functions $g$ in $L^{p}(\Omega'')$ satisfying average zero, i.e., $\int_{\Omega''}g d x=0$. 

%Proposition 3.1
\begin{prop}{\normalfont{(\cite{FS92}, \cite[Theorem 1.2]{FS1})}}
Let $\vartheta\in (\pi/2,\pi)$ and $\lambda\in \sum_{\vartheta,0}$. For $h\in L^{p}(\Omega'')$ and $g\in W^{1,p}(\Omega'')\cap L^{p}_{\textrm{av}}(\Omega'')$, there exists a unique solution of (3.1)--(3.3) satisfying the estimate (1.7) with the constant $C_p$ depending on $\vartheta, p ,n$ and the $C^{2}$-regularity of $\partial\Omega''$.
\end{prop}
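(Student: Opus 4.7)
The plan is to reduce the inhomogeneous divergence problem \((3.1)\text{--}(3.3)\) to the homogeneous divergence case by means of a Bogovski\u{\i}-type right inverse of the divergence, and then invoke the classical $L^p$ resolvent estimate for the Stokes operator with solenoidal data. Since $\Omega''$ is a bounded $C^{2}$-domain, it is (after subtracting a mean if necessary) a star-shaped-with-respect-to-a-ball union, so the Bogovski\u{\i} construction is available with the full range of estimates needed.

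Concretely, for $g\in W^{1,p}(\Omega'')\cap L^{p}_{\mathrm{av}}(\Omega'')$ the Bogovski\u{\i} operator $\mathcal{B}$ produces $w=\mathcal{B}g\in W^{2,p}(\Omega'')\cap W^{1,p}_{0}(\Omega'')$ with $\mathrm{div}\,w=g$ and the three estimates
\[
\|w\|_{L^{p}(\Omega'')}\leq C\|g\|_{W^{-1,p}_{0}(\Omega'')},\quad \|\nabla w\|_{L^{p}(\Omega'')}\leq C\|g\|_{L^{p}(\Omega'')},\quad \|\nabla^{2} w\|_{L^{p}(\Omega'')}\leq C\|\nabla g\|_{L^{p}(\Omega'')}.
\]
The first of these is the non-trivial improvement obtained by writing $g=\mathrm{div}\,F$ with $\|F\|_{W^{1,p'}_0}^{*}\sim \|g\|_{W^{-1,p}_0}$ and using the duality representation of $\mathcal{B}$; the other two are the standard Calder\'on--Zygmund bounds for $\mathcal{B}$ on $C^{2}$-domains. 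Setting $\tilde u = u-w$, the pair $(\tilde u,p)$ formally satisfies
\[
\lambda \tilde u - \Delta \tilde u + \nabla p = h-\lambda w + \Delta w,\qquad \mathrm{div}\,\tilde u = 0\ \text{in}\ \Omega'',\qquad \tilde u = 0\ \text{on}\ \partial\Omega''.
\]

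To this solenoidal problem I apply the classical $L^p$ Stokes resolvent estimate of Giga--Sohr (valid on any bounded $C^{2}$-domain and uniform in $\lambda\in\Sigma_{\vartheta,0}$), which gives existence and uniqueness of $(\tilde u,p)\in (W^{2,p}\cap W^{1,p}_{0})\times (W^{1,p}/\mathbf{R})$ together with
\[
|\lambda|\|\tilde u\|_{L^{p}}+|\lambda|^{1/2}\|\nabla \tilde u\|_{L^{p}}+\|\nabla^{2}\tilde u\|_{L^{p}}+\|\nabla p\|_{L^{p}}\leq C_{p}\bigl(\|h\|_{L^{p}}+|\lambda|\|w\|_{L^{p}}+\|\Delta w\|_{L^{p}}\bigr).
\]
Inserting the Bogovski\u{\i} estimates for $w$ bounds the right-hand side by $\|h\|_{L^{p}}+C|\lambda|\|g\|_{W^{-1,p}_{0}}+C\|\nabla g\|_{L^{p}}$, which is exactly the right-hand side of (1.7). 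Reassembling $u=\tilde u + w$ and absorbing the Bogovski\u{\i} bounds for $w$ into the left-hand side (at each differentiation level, using $\|w\|_{L^{p}}\lesssim \|g\|_{W^{-1,p}_{0}}$, $\|\nabla w\|_{L^{p}}\lesssim \|g\|_{L^{p}}\lesssim \|\nabla g\|_{L^{p}}$ by Poincar\'e on $L^{p}_{\mathrm{av}}$, and $\|\nabla^{2}w\|_{L^{p}}\lesssim \|\nabla g\|_{L^{p}}$) yields the stated estimate (1.7).

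Uniqueness is standard: if $h=0$ and $g=0$ then $u$ is solenoidal with $u|_{\partial\Omega''}=0$, so testing against $u$ and integrating the pressure term away gives $\lambda\|u\|_{L^{2}}^{2}+\|\nabla u\|_{L^{2}}^{2}=0$, and since $\mathrm{Re}\,\lambda$ may be negative one instead uses the known resolvent bijectivity of the $L^{q}$-Stokes operator for $q$ close to $2$ together with elliptic bootstrap to conclude $u\equiv 0$ and $\nabla p \equiv 0$. The main (and only) non-routine obstacle is the sharp Bogovski\u{\i} bound $\|w\|_{L^{p}}\leq C\|g\|_{W^{-1,p}_{0}}$, which is essential to produce the $|\lambda|\|g\|_{W^{-1,p}_{0}}$ term rather than the much larger $|\lambda|\|g\|_{L^{p}}$; without it the estimate (1.7) would not be scale-invariant in $\lambda$ in the way required later for the Masuda--Stewart interpolation argument.
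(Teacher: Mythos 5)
The paper does not prove this proposition at all: it is quoted verbatim from Farwig--Sohr (\cite{FS92}, \cite[Theorem 1.2]{FS1}), whose proof proceeds by localization to whole- and half-space model problems with explicit multiplier computations for the inhomogeneous divergence. Your reduction via a Bogovski\u{\i} right inverse of the divergence plus the classical solenoidal resolvent estimate is therefore necessarily a different route, and it is a recognized one; you also correctly identify the negative-order bound $\|\mathcal{B}g\|_{L^p}\leq C\|g\|_{W^{-1,p}_0}$ as the crux (this is a genuine theorem, due to Geissert--Heck--Hieber, not a formal duality manipulation, and it postdates the cited results). Nevertheless, as written the argument has two gaps.

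First, the estimate $\|\nabla^{2}\mathcal{B}g\|_{L^{p}(\Omega'')}\leq C\|\nabla g\|_{L^{p}(\Omega'')}$ is only available for $g\in W^{1,p}_{0}(\Omega'')$: the higher-order mapping property of the Bogovski\u{\i} operator, $\mathcal{B}:W^{m,p}_{0}\cap L^{p}_{\mathrm{av}}\to W^{m+1,p}_{0}$, requires vanishing traces for $m\geq 1$, because the hypersingular kernel obtained after two differentiations must be tamed by an integration by parts that produces boundary terms otherwise. The proposition, however, is stated for arbitrary $g\in W^{1,p}(\Omega'')\cap L^{p}_{\mathrm{av}}(\Omega'')$, which is exactly the generality Farwig--Sohr achieve and which your construction does not reach. (In the paper's actual application $g=v\cdot\nabla\theta_{0}$ does vanish on $\partial\Omega''$, so your argument could be salvaged for the localization step of Section 3, but not for the proposition as stated.) Second, your absorption of the first-order term is mis-justified: bounding $|\lambda|^{1/2}\|\nabla w\|_{L^{p}}\lesssim|\lambda|^{1/2}\|g\|_{L^{p}}\lesssim|\lambda|^{1/2}\|\nabla g\|_{L^{p}}$ by Poincar\'e leaves an uncontrolled factor $|\lambda|^{1/2}$ in front of $\|\nabla g\|_{L^{p}}$, which is not dominated by the right-hand side of (1.7); the correct route is the interpolation $\|g\|_{L^{p}}\leq C\|g\|_{W^{-1,p}_{0}}^{1/2}\|g\|_{W^{1,p}}^{1/2}$ followed by Young's inequality, giving $|\lambda|^{1/2}\|g\|_{L^{p}}\leq C\bigl(|\lambda|\|g\|_{W^{-1,p}_{0}}+\|\nabla g\|_{L^{p}}\bigr)$. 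A minor further point: $h-\lambda w+\Delta w$ is not solenoidal, so before invoking the solenoidal resolvent estimate you must apply the Helmholtz projection on $L^{p}(\Omega'')$ and account for its gradient part in $\nabla p$; this is routine on a bounded $C^{2}$-domain but should be said.
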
     

We estimate the $L^{\infty}$-norms of a solution up to first derivatives via the Sobolev embeddings together with the $L^{p}$-estimates (1.7) for $p>n$. In order to estimate the $L^{\infty}$-norms of a solution, we apply the interpolation inequality (1.11). Actually, if $\Omega_{x_0,r}=B_{x_0}(r)$, the stronger estimate (A.1) holds, i.e., we are able to replace the right-hand side of (1.11) by the norms for $\varphi$ and $\nabla\varphi$ on $B_{x_0}(r)$. However, if $B_{x_0}(r)$ is near the boundary $\partial\Omega$, $\partial\Omega_{x_0,r}$ may not be $C^{1}$-boundary. We thus estimate the sup-norm of $\varphi$ in $\Omega_{x_0,r}$ by the norms for $\varphi$ and $\nabla\varphi$ in $\Omega_{x_0,2r}$. In Appendix A, we prove the inequality (1.11) with the constant $C_{I}$ independent of $x_0$ and $r$; see Lemma A.2. In what follows, we fix the constant $r_0$ with the same constant $r_0$ given in Lemma 2.5.

%Subsection 3.2
\subsection{Estimates in the localization procedure }

We prepare the estimates for $h$ and $g$ in the procedure (ii). The estimate for $|\lambda|||g||_{W^{-1,p}_{0}}$ is different from that of $||h||_{L^{p}}$. In order to estimate $|\lambda|||g||_{W^{-1,p}_{0}}$, we use the uniformly local $L^{p}$-norm bound for $\nabla q$ besides the sup-bound of $\nabla v$ as in (3.7). After establishing these estimates, we will put the procedures (i)-(iii) together in the next subsection.\\     

Let $\Omega$ be a uniformly $C^{2}$-domain. Let $\theta$ be a smooth cut-off function satisfying $\theta\equiv1$ in $[0,1/2]$ and $\theta\equiv 0$ in $[1,\infty)$. For $x_0\in \Omega$ and $r>0$, we set $\theta_{0}(x)=\theta(|x-x_0|/(\eta+1)r)$ with parameters $\eta\geq 1$ and observe that $\theta_{0}\equiv1$ in $B_{x_0}(r)$ and $\theta_{0}\equiv 0$ in $B_{x_0}((\eta+1)r)^{c}$. The cut-off function $\theta_{0}$ is uniformly bounded by a constant $K$, i.e.,  
\begin{equation*}
||\theta_{0}||_{\infty}+(\eta+1)r||\nabla \theta_{0}||_{\infty}+(\eta+1)^{2}r^{2}||\nabla^2\theta_0||_{\infty}\leq K,\quad\textrm{for}\ \eta\geq 1.  \tag{3.4}
\end{equation*}       
Let $(v,\nabla q)\in W^{2,p}_{\textrm{loc}}(\bar{\Omega})\times L^{p}_{\textrm{loc}}(\bar{\Omega})$ be a solution of (1.1)--(1.3) for $f\in L^{\infty}_{\sigma}(\Omega)$ and $\lambda \in \Sigma_{\vartheta,0}$. We localize a solution $(v,\nabla q)$ in the domain $\Omega'=\Omega_{x_{0},(\eta+1)r}$ by setting $u=v\theta_{0}$ and $p=\hat{q}\theta_{0}$ where $\hat{q}=q-q_c$ and a constant $q_{c}$. Then, $(u,\nabla p)$ solves the localized equation (3.1)--(3.3) in the domain $\Omega'$ with $h$ and $g$ given by (1.8). We take parameters $\eta\geq 1$ and $r>0$ such that $(\eta+2)r\leq r_0$. Since we adjust parameters $\eta\geq 1$ later, we take a $C^{2}$-bounded domain $\Omega''$ such that $\Omega_{x_0,r_0}\subset \Omega''$ and apply the $L^{p}$-estimate (1.7) in $\Omega''$. Note that $\Omega'\subset \Omega''$ for all $\eta\geq 1$ and $r>0$ satisfying $(\eta+2)r\leq r_0$. We shall  show the following estimates for $h$ and $g$:  
\begin{align*}
||\nabla g||_{L^{p}(\Omega'')}
&\leq C_1r^{n/p}(\eta+1)^{-(1-n/p)}\left(r^{-1}|| \nabla v||_{L^{\infty}(\Omega)}+r^{-2}||v||_{L^{\infty}(\Omega)}\right),    \tag{3.5} \\  
||h||_{L^{p}(\Omega'')}
&\leq C_2r^{n/p}\Bigg((\eta+1)^{n/p}||f||_{L^{\infty}(\Omega)}\\
&+(\eta+1)^{-(1-n/p)}\Big(r^{-1}|| \nabla v||_{L^{\infty}(\Omega)}+r^{-2}||v||_{L^{\infty}(\Omega)}\Big)\Bigg),   \tag{3.6}  \\
|\lambda|||g||_{W^{-1,p}_{0}(\Omega'')}
&\leq C_3r^{n/p}\Bigg((\eta+1)^{n/p}||f||_{L^{\infty}(\Omega)}\\
&+(\eta+1)^{-(1-2n/p)}\Big(r^{-1}|| \nabla v||_{L^{\infty}(\Omega)}
+r^{-n/p}\sup_{z\in\Omega}||\nabla q||_{L^{p}(\Omega_{z,r})}\Big)\Bigg).    \tag{3.7}
\end{align*}   
The constants $C_1, C_2$ and $C_3$ are independent of $r$ and $\eta\geq 1$ satisfying $(\eta+2)r\leq r_0$. Since $h$ and $g$ are supported in $\Omega'$, we have $||h||_{L^{p}(\Omega')}=||h||_{L^{p}(\Omega'')}$ and $||\nabla g||_{L^{p}(\Omega')}=||\nabla g||_{L^{p}(\Omega'')}$.

For the estimates of the terms $f, v$ and $\nabla v$, we use the estimates 
\begin{align*}   
||f\theta_{0}||_{L^{p}(\Omega')}
&\leq KC_{n}^{1/p}r^{n/p}(\eta+1)^{n/p}||f||_{L^{\infty}(\Omega)},\tag{3.8}\\
||\nabla v\nabla \theta_{0}||_{L^{p}(\Omega')}
&\leq KC_{n}^{1/p}r^{n/p}(\eta+1)^{-(1-n/p)}r^{-1}||\nabla v||_{L^{\infty}(\Omega)},   \tag{3.9} \\
||v\nabla^{2} \theta_{0}||_{L^{p}(\Omega')}
&\leq KC_{n}^{1/p}r^{n/p}(\eta+1)^{-(1-n/p)}r^{-2}|| v||_{L^{\infty}(\Omega)},   \tag{3.10}
\end{align*}  
for all $r>0$ and $\eta\geq 1$, where the constant $C_{n}$ denotes the volume of the $n$-dimensional unit ball. Since $\nabla g=\nabla v\nabla \theta_{0}+v\nabla^{2}\theta_{0}$ does not contain the pressure, the estimate (3.5) easily follows from the estimates (3.9) and (3.10). 

For the estimates (3.6) and (3.7), we apply the inequality (1.10). We choose a constant $q_{c}$ by a mean value of $q$ in $\Omega_{x_{0},(\eta+2)r}$, i.e.,  
\begin{equation*}
q_{c}=\fint_{\Omega_{x_0,(\eta+2)r}}q(x)dx.   \tag{3.11}
\end{equation*} 
We then observe that the inequality (1.10) implies the estimate  
\begin{equation*}
||\hat{q}||_{L^{p}(\Omega_{x_0,(\eta+2)r})}\leq Cr^{n/p}(\eta+2)^{n/p}||\nabla q||_{L^{\infty}_{d}(\Omega)}   \tag{3.12}
\end{equation*}
for $r>0$ and $\eta\geq 1$ satisfying $(\eta+2)r\leq r_0$, where $\hat{q}=q-q_c$.

In order to show the estimate (3.7), we estimate the $L^{\infty}$-norm of $\hat{q}$ on $\Omega'$ since by using the equation $\lambda v=f+\Delta v-\nabla q$, we reduce (3.7) to the estimate of the boundary value of $\hat{q}$ on $\partial\Omega'$. 
This is the reason why we take $q_c$ by (3.11). We apply the inequality (1.11) in $\Omega_{x_1,r/2}\subset \Omega_{x_{0},(\eta+2)r}$ for $x_1\in \Omega'$ and $r\leq r_0$ with $p>n$ to estimate
\begin{align*}
||\hat{q}||_{L^{\infty}(\Omega_{x_1,r/2})}
&\leq C_{I}r^{-n/p}\Big(||\hat{q}||_{L^{p}(\Omega_{x_1,r})}+r||\nabla q||_{L^{p}(\Omega_{x_1,r})} \Big)\\
&\leq C_{I}r^{-n/p}\Big(||\hat{q}||_{L^{p}(\Omega_{x_0,(\eta+2)r})}+r\sup_{z\in \Omega}||\nabla q||_{L^{p}(\Omega_{z,r})} \Big).    \tag{3.13}
\end{align*}
Combining the estimate (3.13) with (3.12) and taking a supremum for $x_{1}\in \Omega'$, we have
\begin{equation*}
||\hat{q}||_{L^{\infty}(\Omega')}
\leq C\Big((\eta+2)^{n/p}||\nabla q||_{L^{\infty}_{d}(\Omega)}+r^{1-n/p}\sup_{z\in \Omega}||\nabla q||_{L^{p}(\Omega_{z,r})}\Big).   \tag{3.14}
\end{equation*}
We now invoke the strictly admissibility of a domain $\Omega$ to estimate the norm $||\nabla q||_{L^{\infty}_{d}(\Omega)}$ by the sup-norm of $\nabla v$ in $\Omega$ via (1.5).   

%Proposition 3.2
\begin{prop}
Let $\Omega$ be a uniformly $C^{2}$-domain. Assume that $\Omega$ is strictly admissible. Then, the estimate
\begin{equation*}     
||\hat{q}||_{L^{p}(\Omega')}\leq C_{4}r^{n/p}(\eta+2)^{n/p}||\nabla v||_{L^{\infty}(\Omega)}  \tag{3.15}
\end{equation*}
holds for all $r>0$ and $\eta\geq 1$ satisfying $(\eta+2)r\leq r_{0}$ and $p\in [1,\infty)$. If in addition $p>n$, then the estimate 
\begin{equation*}
||\hat{q}||_{L^{\infty}(\Omega')}
\leq C_5
\left(
(\eta+2)^{n/p}||\nabla v||_{L^{\infty}(\Omega)}
    +r^{1-n/p}\sup_{z\in \Omega}||\nabla q||_{L^{p}(\Omega_{z,r})}
    \right)   \tag{3.16}
\end{equation*}
 holds. The constants $C_4$ and $C_5$ are independent of $r$ and $\eta$. 
\end{prop}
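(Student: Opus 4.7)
The plan is to derive both estimates by combining three ingredients already available: the Poincar\'{e}--Sobolev-type inequality (1.10) from Lemma 2.5, the strict admissibility bound (1.5), and the trivial pointwise inequality $|W(v)(x)| \le 2|\nabla v(x)|$ for $x \in \partial\Omega$, which is immediate from $W(v) = -(\nabla v - \nabla^{T} v) n_{\Omega}$ and $|n_{\Omega}|=1$. Under the hypotheses, $v \in W^{2,p}_{\textrm{loc}}(\bar\Omega)$ with $p > n$ already implies (via Sobolev embedding into $C^{1}_{\textrm{loc}}$) that $\nabla v$ admits a continuous trace on $\partial\Omega$ whose modulus is pointwise bounded by $\|\nabla v\|_{L^{\infty}(\Omega)}$, so $\|W(v)\|_{L^{\infty}(\partial\Omega)} \le 2\|\nabla v\|_{L^{\infty}(\Omega)}$.

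To prove (3.15), I would first apply Lemma 2.5, i.e., the inequality (1.10), to $\varphi = q$ with center $x_{0}$ and radius $s = (\eta+2)r \le r_{0}$. Since $q_{c}$ was chosen in (3.11) as precisely the mean of $q$ over $\Omega_{x_{0},(\eta+2)r}$, this is exactly the intermediate estimate (3.12):
\begin{equation*}
\|\hat{q}\|_{L^{p}(\Omega_{x_{0},(\eta+2)r})} \le C r^{n/p} (\eta+2)^{n/p} \|\nabla q\|_{L^{\infty}_{d}(\Omega)}.
\end{equation*}
Next, the strict admissibility estimate (1.5) bounds $\|\nabla q\|_{L^{\infty}_{d}(\Omega)}$ by $C_{\Omega}\|W(v)\|_{L^{\infty}(\partial\Omega)}$, which in turn is at most $2C_{\Omega}\|\nabla v\|_{L^{\infty}(\Omega)}$ by the pointwise bound on $W(v)$ noted above. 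Since $\Omega' = \Omega_{x_{0},(\eta+1)r} \subset \Omega_{x_{0},(\eta+2)r}$, restricting the $L^{p}$-norm to $\Omega'$ yields (3.15) with $C_{4} = 2 C C_{\Omega}$.

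For (3.16), the computation already laid out in the text between (3.13) and (3.14) does the rest of the work. Specifically, the interpolation inequality (1.11) applied in $\Omega_{x_{1},r/2} \subset \Omega_{x_{0},(\eta+2)r}$ for $x_{1}\in\Omega'$ gives (3.13); taking the supremum over $x_{1} \in \Omega'$ and inserting (3.12) produces (3.14). At this point the same strict admissibility plus vorticity bound used above converts the factor $\|\nabla q\|_{L^{\infty}_{d}(\Omega)}$ on the right-hand side of (3.14) into $2C_{\Omega}\|\nabla v\|_{L^{\infty}(\Omega)}$, giving (3.16). The restriction $p > n$ is used only to guarantee the $L^{p} \to L^{\infty}$ step in (1.11).

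The real content of the proposition sits in the two inputs, not in the combination: Lemma 2.5 was proved by a delicate contradiction argument with rescaling and uniform curvilinear control of the boundary, and the strict admissibility assumption (1.6) is what allows the pressure gradient to be controlled by the tangential vorticity at all. Once both are granted, the proof of Proposition 3.2 reduces to a careful bookkeeping of the scaling factors $r$ and $(\eta+2)$, which is the only thing one really has to be attentive to.
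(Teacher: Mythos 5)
Your proposal is correct and follows exactly the paper's own route: the paper's proof of Proposition 3.2 is simply ``By (1.5), (3.12) and (3.14), the assertion follows,'' and your argument spells out precisely those three ingredients (the Poincar\'e--Sobolev inequality (1.10) with $s=(\eta+2)r$ matched to the choice of $q_c$ in (3.11), the interpolation step (3.13)--(3.14), and strict admissibility (1.5) combined with the pointwise bound $|W(v)|\le 2|\nabla v|$). Nothing is missing; the bookkeeping of the factors $r^{n/p}(\eta+2)^{n/p}$ is handled correctly.
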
 

\begin{proof}
By (1.5), (3.12) and (3.14), the assertion follows. 
\end{proof}

By using the estimates (3.15) and (3.16), we obtain the estimates (3.6) and (3.7).
%Lemma 3.3
\begin{lem}
Let $\Omega$ be a strictly admissible, uniformly $C^{2}$-domain. Let $(v,\nabla q)\in W^{2,p}_{\textrm{loc}}(\bar\Omega)\times (L^{p}_{\textrm{loc}}(\bar\Omega)\cap L^{\infty}_{d}(\Omega))$ be a solution of (1.1)--(1.3) for $f\in L^{\infty}_{\sigma}(\Omega)$, $\lambda\in \sum_{\vartheta,0}$ and  $p>n$. Then, the estimates (3.5)--(3.7) hold for $\Omega'=B_{x_0}((\eta+1)r)\cap \Omega$, $x_0\in \Omega$,  $r>0$ and $\eta\geq 1$ satisfying $(\eta+2)r\leq r_{0}$ with the constants $C_1$, $C_2$ and $C_3$ independent of $x_0$, $r$ and $\eta$.
\end{lem}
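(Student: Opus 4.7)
The plan is to verify (3.5)--(3.7) in turn. In each case I will use the cut-off bounds (3.4), the volume bound $|\Omega'|\le C_n r^n(\eta+1)^n$, and Proposition 3.2 (through (3.15) and (3.16)) to trade pressure norms for velocity norms.

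Estimate (3.5) follows by differentiating $g = v\cdot\nabla\theta_0$ to obtain $\nabla g = (\nabla v)\nabla\theta_0 + v\,\nabla^2\theta_0$ and inserting the recorded bounds (3.9) and (3.10). For (3.6), the four summands of $h = f\theta_0 - 2(\nabla v)\nabla\theta_0 - v\Delta\theta_0 + \hat q\,\nabla\theta_0$ are treated separately: the first three are handled by (3.8)--(3.10), and for the pressure error $\hat q\,\nabla\theta_0$ I combine $\|\nabla\theta_0\|_\infty\le K/((\eta+1)r)$ with the $L^p$-bound (3.15) from Proposition 3.2, producing (using $\eta+2\le 2(\eta+1)$) a contribution of order $r^{n/p}(\eta+1)^{-(1-n/p)}r^{-1}\|\nabla v\|_\infty$, which fits inside the right-hand side of (3.6).

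The main work goes into (3.7). Since $\textrm{div}\,v = 0$ in $\Omega$ and $v=0$ on $\partial\Omega$, we have $g = v\cdot\nabla\theta_0 = \textrm{div}(v\theta_0)$. Pairing against $\varphi \in W^{1,p'}(\Omega'')$ and integrating by parts yields
\begin{equation*}
\int_{\Omega''} g\,\varphi\,dx = -\int_{\Omega''} v\theta_0\cdot\nabla\varphi\,dx,
\end{equation*}
with the boundary contribution on $\partial\Omega''$ vanishing because $\theta_0 \equiv 0$ on $\partial\Omega'' \setminus \partial\Omega$ (the support of $\theta_0$ lies in $B_{x_0,(\eta+1)r}$, which is compactly contained in $\Omega''$) and $v \equiv 0$ on $\partial\Omega \cap \partial\Omega''$. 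Multiplying by $\lambda$ and substituting $\lambda v = f + \Delta v - \nabla q$ from (1.1) produces three integrals on the right-hand side. The $f\theta_0\cdot\nabla\varphi$ integral is bounded directly by $\|f\|_\infty\|\theta_0\|_{L^p}\|\nabla\varphi\|_{L^{p'}}$, producing the $(\eta+1)^{n/p}\|f\|_\infty$ contribution. For $\int \Delta v\,\theta_0\cdot\nabla\varphi$, iterated integration by parts --- moving one derivative off each $\partial_j^2v_i$ and then a second derivative back onto $v_i$ via $\partial_j(\partial_i v_i) = \partial_j\,\textrm{div}\,v = 0$ --- reduces everything to two integrals of the form $\int\nabla v:(\nabla\theta_0\otimes\nabla\varphi)$, each controlled by $\|\nabla v\|_\infty\|\nabla\theta_0\|_{L^p}\|\nabla\varphi\|_{L^{p'}} \le Cr^{n/p}(\eta+1)^{-(1-n/p)}r^{-1}\|\nabla v\|_\infty$; the arising boundary terms again vanish by the $\theta_0$/$v$ zero-trace argument.

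The main obstacle is the remaining pressure integral $\int\nabla q\,\theta_0\cdot\nabla\varphi$. A direct H\"older bound is too wasteful, so I would exploit that $q$ is harmonic in $\Omega$ (taking the divergence of (1.1), using $\textrm{div}\,v = 0$ and the solenoidality of $f \in L^\infty_\sigma$ yield $\Delta q = 0$) and integrate by parts moving $\nabla$ off $\varphi$:
\begin{equation*}
\int_{\Omega'}\nabla q\,\theta_0\cdot\nabla\varphi\,dx = -\int_{\Omega'}\nabla q\cdot\nabla\theta_0\,\varphi\,dx + \int_{\partial\Omega\,\cap\, B_{x_0,(\eta+1)r}}\theta_0\,\varphi\,\partial_{n_\Omega}q\,dS,
\end{equation*}
with no $\Delta q\,\varphi$ bulk term (by harmonicity) and no contribution from the spherical portion $\Omega\cap\partial B_{x_0,(\eta+1)r}$ of $\partial\Omega'$ (since $\theta_0\equiv 0$ there). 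The boundary trace is handled by the Neumann identity $\partial_{n_\Omega}q = \textrm{div}_{\partial\Omega}\,W(v)$ from (1.5) followed by a surface integration by parts, producing a pairing against $W(v)$, which is controlled by $\|\nabla v\|_\infty$, while trace theorems then transfer the remaining $\varphi$ and $\nabla_{\partial\Omega}\varphi$ factors into $W^{1,p'}(\Omega'')$ control. The bulk piece $\int\nabla q\cdot\nabla\theta_0\,\varphi$ is rewritten using $\nabla q\cdot\nabla\theta_0 = \textrm{div}(\hat q\,\nabla\theta_0) - \hat q\,\Delta\theta_0$ and estimated using the $L^\infty$-bound (3.16) on $\hat q$ together with (3.15), which is what injects the uniformly local norm $\sup_z\|\nabla q\|_{L^p(\Omega_{z,r})}$. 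Assembling all contributions delivers (3.7), and it is precisely at this step that the strict admissibility of $\Omega$ is used, entering through (1.5) into Proposition 3.2.
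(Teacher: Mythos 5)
Your treatment of (3.5) and (3.6) is correct and coincides with the paper's: (3.5) is immediate from (3.9)--(3.10), and (3.6) from (3.8)--(3.10) together with the $L^p$-bound (3.15) on $\hat q$ coming from the Poincar\'e--Sobolev inequality and strict admissibility. The problem is (3.7). By writing $\lambda\int g\varphi=-\int\lambda v\,\theta_0\cdot\nabla\varphi$ you commit the test function to appearing as $\nabla\varphi$, and then every integration by parts you perform to lower the order of derivatives on $v$ or $q$ (which you must do, since you need the factor $\nabla\theta_0\sim((\eta+1)r)^{-1}$ to produce the negative power of $\eta+1$) deposits a boundary integral over $\partial\Omega\cap B_{x_0}((\eta+1)r)$, where $\theta_0\not\equiv0$, carrying the trace of $\nabla\varphi$. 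Your claim that these boundary terms ``vanish by the $\theta_0$/$v$ zero-trace argument'' is false: they involve $\nabla v$ on $\partial\Omega$, not $v$, and $\nabla v$ does not vanish there --- indeed the tangential trace $W(v)=-(\nabla v-\nabla^Tv)n_\Omega$ is generically nonzero (it is precisely the datum in the Neumann problem (1.5)). A careful bookkeeping of your two integrations by parts on $\int\Delta v\,\theta_0\cdot\nabla\varphi$ leaves exactly the surface term $\int_{\partial\Omega'}(\nabla v-\nabla^Tv)n_\Omega\cdot\nabla\varphi\,\theta_0\,d\mathcal{H}^{n-1}$. The same defect appears in your pressure step: after the surface integration by parts of $\int\theta_0\varphi\,\partial_{n_\Omega}q$ you are left pairing $W(v)$ against $\nabla_{\partial\Omega}(\theta_0\varphi)$. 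In both places you need to control $\|\nabla\varphi\|_{L^1(\partial\Omega')}$ by $\|\varphi\|_{W^{1,p'}(\Omega')}\le1$, and no trace theorem does this; the trace of $\nabla\varphi$ for $\varphi\in W^{1,p'}$ is not even well defined. This is a genuine gap, not a presentational one.

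The paper's arrangement is designed precisely to avoid this. It writes $\lambda g=(f+\Delta v-\nabla q)\cdot\nabla\theta_0$ (putting the derivative on $\theta_0$ from the start) and pairs against $\varphi$ itself. For the $\Delta v$ part it uses $\sum_j\partial_j^2v^i=\sum_j\partial_j(\partial_jv^i-\partial_iv^j)$ and integrates by parts once, so the bulk term is $\int(\partial_jv^i-\partial_iv^j)\partial_j\theta_0\,\partial_i\varphi$ (controlled by $\|\nabla v\|_\infty\|\nabla\theta_0\|_{L^p}$) and the boundary term carries only $\varphi$, which is handled by the $L^1(\partial\Omega')$ trace estimate (3.19), $\|\varphi\|_{L^1(\partial\Omega')}\le C_T\|\varphi\|_{W^{1,1}(\Omega')}\le 2C_T|\Omega'|^{1/p}$. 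For the pressure part it writes $\int\nabla q\cdot\nabla\theta_0\,\varphi=-\int\hat q(\Delta\theta_0\varphi+\nabla\theta_0\cdot\nabla\varphi)+\int_{\partial\Omega'}\hat q\,\varphi\,\nabla\theta_0\cdot n_{\Omega'}$ and uses (3.15), (3.16) and again (3.19); neither harmonicity of $q$ nor the Neumann boundary identity is invoked at this stage --- strict admissibility enters only through (1.5) inside Proposition 3.2. (The paper also replaces $\Omega'$ by a $C^1$ domain $\tilde\Omega'$ with $|\tilde\Omega'|\le C|\Omega'|$ before applying the trace estimate, since $\partial\Omega'$ need not be $C^1$; your argument is silent on this point as well.) To repair your proof you should either switch to the paper's pairing, or find a way to integrate the offending surface terms by parts back into the bulk without ever taking a trace of $\nabla\varphi$.
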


%Proof of Lemma 3.3
\begin{proof}
As mentioned before, (3.5) follows from (3.9) and (3.10). The estimate (3.6) follows from the estimates (3.8)--(3.10) and (3.15). We shall show the estimate (3.7). Since $||g||_{W^{-1,p}_{0}(\Omega'')}\leq ||g||_{W^{-1,p}_{0}(\Omega')}$, we estimate $||g||_{W^{-1,p}_{0}(\Omega')}$. Note that $\partial\Omega'$ may not be $C^{1}$ on the intersection $\partial\Omega\cap B_{x_0}((\eta+1)r)$. We first show (3.7) with assuming that $\partial\Omega'$ has $C^{1}$-boundary. By using the equation $\lambda g=\lambda v\cdot\nabla \theta_{0}=(f+\Delta v-\nabla q)\cdot\nabla \theta_{0}$, we estimate
\begin{equation*}
|\lambda|||g||_{W^{-1,p}_{0}(\Omega')}\leq ||f\cdot\nabla\theta_{0}||_{W^{-1,p}_{0}(\Omega')}+||\Delta v\cdot\nabla \theta_{0}||_{W^{-1,p}_{0}(\Omega')}+||\nabla q\cdot\nabla \theta_0||_{W^{-1,p}_{0}(\Omega')}.
\end{equation*}
Since $||f\cdot\nabla\theta_{0}||_{W^{-1,p}_{0}(\Omega')}\leq||f\theta_{0}||_{L^{p}(\Omega')}$ for $f\in L^{\infty}_{\sigma}(\Omega)$,
it suffices to show the estimates 
 \begin{align*}
 ||\Delta v\cdot\nabla \theta_{0}||_{W^{-1,p}_{0}(\Omega')}
 &\leq C_{6}r^{n/p}(\eta+1)^{-(1-n/p)}r^{-1}||\nabla v||_{L^{\infty}(\Omega)},          \tag{3.17}\\
 ||\nabla q\cdot\nabla \theta_0||_{W^{-1,p}_{0}(\Omega')}
 &\leq C_{7}r^{n/p}(\eta+1)^{-(1-2n/p)}\Bigg(r^{-1}||\nabla v||_{L^{\infty}(\Omega)}+r^{-n/p}\sup_{z\in \Omega}||\nabla q||_{L^{p}(\Omega_{z,r})}\Bigg).  \tag{3.18}
 \end{align*}
We first show (3.17). Take $\varphi\in W^{1,p'}(\Omega')$ satisfying $||\varphi||_{W^{1,p'}(\Omega')}\leq 1$. By using $\textrm{div}\ v=0$, integration by parts yields that
 \begin{equation*}
 \sum_{i,j=1}^{n}\int_{\Omega'}\partial_{j}^{2}v^{i}\partial_{i}\theta_{0}\varphi dx
 =\sum_{i,j=1}^{n}\int_{\Omega'}(\partial_{j}v^{i}-\partial_{i}v^{j})\partial_{j}\theta_{0}\partial_{i}\varphi dx
 -\int_{\partial\Omega'}(\partial_{j}v^{i}-\partial_{i}v^{j})\partial_{j}\theta_{0}\varphi n^{i}_{\Omega}d{\cal{H}}^{n-1}(x).  
\end{equation*}  
We estimate the second term in the right-hand side by the  $W^{1,1}$-norm of $\varphi$ in $\Omega'$ \cite[5.5 Theorem 1.1]{E} to estimate
\begin{equation*}
||\varphi||_{L^{1}(\partial\Omega')}
\leq C_{T}||\varphi||_{W^{1,1}(\Omega')} \notag  
\leq 2C_{T}|\Omega'|^{1/p}    \tag{3.19}
\end{equation*}
with the constant $C_{T}$ depending on the $C^{1}$-regularity of the boundary $\partial\Omega$ but independent of $|\Omega'|$, the volume of $\Omega'$. We thus obtain 
\begin{equation*}
\begin{split}
\left|\sum_{i,j=1}^{n}\int_{\Omega'}\partial^{2}_{j}v^{i}\partial_{i}\theta_{0}\varphi dx\right|
&\leq (1+2C_{T})\sum_{i,j=1}^{n}||(\partial_{j}v^{i}-\partial_{i}v^{j})\partial_{j}\theta_{0}||_{L^{\infty}(\Omega')} |\Omega'|^{1/p}\\
&\leq 4n(1+2C_{T} )K {C_{n}}^{1/p}r^{n/p}(\eta+1)^{-(1-n/p)}r^{-1}||\nabla v||_{L^{\infty}(\Omega)}.
\end{split}
\end{equation*} 
Thus, the estimate (3.17) holds with the constant $C_6$ independent of $r$ and $\eta$. It remains to show the estimate (3.18). Since $\nabla q=\nabla \hat{q}$, integration by parts yields that 
\begin{equation*}
\begin{split} 
\int_{\Omega'}\nabla q\cdot\nabla \theta_{0}\varphi dx
&=-\int_{\Omega'} \hat{q}(\Delta\theta_{0}\varphi+\nabla \theta_{0}\cdot \nabla \varphi)dx+\int_{\partial\Omega'}\hat{q}\varphi\nabla\theta_{0}\cdot n_{\Omega'}d{\cal{H}}^{n-1}(x)\\
&= I+II+III.
\end{split}
\end{equation*} 
Combining (3.4), (3.19) with (3.16), we obtain
\begin{align*}
II +III
&\leq (1+2C_{T})||\hat{q}\nabla \theta_{0}||_{L^{\infty}(\Omega')}|\Omega'|^{1/p}\\
&\leq (1+2C_{T})K{C_{n}}^{1/p}r^{n/p}(\eta+1)^{-(1-n/p)}r^{-1}||\hat{q}||_{L^{\infty}(\Omega')}\\
&\leq Cr^{n/p}(\eta+1)^{-(1-2n/p)}\left(r^{-1}||\nabla v||_{L^{\infty}(\Omega)}+r^{-n/p}\sup_{z\in\Omega}||\nabla q||_{L^{p}(\Omega_{z,r})} \right),     
\end{align*}
with the constant $C$ depending on $C_T, K, C_n, p, C_4$ and $C_5$ but independent of $r$ and $\eta$. We complete the proof by showing the estimate for $I$. Applying the H\"{o}lder inequality, for $s,s'\in (1,\infty)$ with $1/s+1/{s'}=1$ we have 
\begin{equation*}
I\leq K(\eta+1)^{-2}r^{-2}||\varphi||_{L^{s}(\Omega')}||\hat{q}||_{L^{s'}(\Omega')}.         
\end{equation*}  
Since $p>n$, the conjugate exponent $p'$ is strictly smaller than $n/(n-1)$ for $n\geq 2$. By setting $1/s=1/{p'}-1/n$, we apply the Sobolev inequality \cite[5.6 Theorem 2]{E} to estimate $||\varphi||_{L^{s}(\Omega')}  
\leq C_{S}||\varphi||_{W^{1,p'}(\Omega')} 
\leq C_{S}$ with the constant $C_s$ independent of $|\Omega'|$. Applying the estimate (3.15) to $\hat{q}$ yields 
\begin{align*}
I
&\leq Cr^{n/s'-2}(\eta+2)^{n/s'-2}||\nabla v||_{L^{\infty}(\Omega)}\\   
&\leq Cr^{n/p}(\eta+2)^{-(1-n/p)}r^{-1}||\nabla v||_{L^{\infty}(\Omega)},
\end{align*}
since $1/s'=1-1/s=1/p+1/n$. The constant $C$ is independent of $r$ and $\eta$. Thus, we proved (3.7) with assuming the $C^{1}$-regularity for $\partial\Omega'$.

If $\partial\Omega'$ is not $C^{1}$, we modify $\Omega'$ around the intersection $\partial\Omega\cap B_{x_0}((\eta+1)r)$, i.e., we take a $C^{1}$-bounded domain $\tilde{\Omega}'\subset\Omega''$ such that $\Omega'\subset \tilde{\Omega}'$ and $|\tilde{\Omega}'|\leq C|\Omega'|$ with the constant $C$ depending on the $C^{1}$-regularity of $\partial\Omega$, but independent of $|\Omega'|$. For example, we take a $C^{1}$-domain $\tilde{\Omega}'$ such that $\Omega_{x_0,(\eta+1)r}\subset \tilde{\Omega}'\subset \Omega_{x_0,(\eta+3/2)r}$. Since $\tilde{\Omega}'\subset \Omega''$ and $g$ is supported in $\Omega'$, it follows that $||g||_{W^{-1,p}_{0}(\Omega'')}\leq ||g||_{W^{-1,p}_{0}(\tilde{\Omega}')}$. Then, we are able to estimate $||g||_{W^{-1,p}_{0}(\tilde{\Omega}')}$ in the same way as above. In fact, we are able to show the estimates: 
 \begin{align*}
 ||\Delta v\cdot\nabla \theta_{0}||_{W^{-1,p}_{0}(\tilde{\Omega}')}
 &\leq C_{6}'r^{n/p}(\eta+1)^{-(1-n/p)}r^{-1}||\nabla v||_{L^{\infty}(\Omega)},          \tag{3.17'}\\
 ||\nabla q\cdot\nabla \theta_0||_{W^{-1,p}_{0}(\tilde{\Omega}')}
 &\leq C_{7}'r^{n/p}(\eta+1)^{-(1-2n/p)}\Bigg(r^{-1}||\nabla v||_{L^{\infty}(\Omega)}+r^{-n/p}\sup_{z\in \Omega}||\nabla q||_{L^{p}(\Omega_{z,r})}\Bigg).  \tag{3.18'}
 \end{align*}
The estimates (3.7) follows from (3.17') and (3.18'). The estimate (3.17') follows by the same way with (3.17) since $\partial\Omega'$ is $C^{1}$ and $|\tilde{\Omega}'|\leq C|\Omega'|$.

We shall show (3.18'). Since $||\hat{q}||_{L^{\infty}(\tilde{\Omega}')}\leq ||\hat{q}||_{L^{\infty}(\Omega_{x_0,(\eta+2)r})}=\sup_{x_1\in \Omega'}||\hat{q}||_{L^{\infty}(\Omega_{x_1,r/2})}$, the stronger estimate than (3.14) holds, i.e., 
\begin{equation*}
||\hat{q}||_{L^{\infty}(\tilde{\Omega}')}
\leq C\Big((\eta+2)^{n/p}||\nabla q||_{L^{\infty}_{d}(\Omega)}+r^{1-n/p}\sup_{z\in \Omega}||\nabla q||_{L^{p}(\Omega_{z,r})}\Big).  
\end{equation*}
Thus, we are able to replace the left-hand side of (3.15) and (3.16) by $||\hat{q}||_{L^{\infty}(\tilde{\Omega}')}$. Then, the estimate (3.18') follows by the same way with (3.18). 

We proved (3.7). The proof is now complete.  
\end{proof}

%Remark 3.4
\begin{rem}\normalfont
From the estimate (3.7), we observe that the exponent $-(1-2n/p)$ of $(\eta+1)$ in front of the term $(r^{-1}||\nabla v||_{L^{\infty}(\Omega)}+r^{-n/p}\sup_{z\in \Omega}||\nabla q||_{L^{p}(\Omega_{z,r})})$ is negative provided that $p>2n$. We thus first prove the a priori estimate (1.4) for $p>2n$. Once we obtain the estimate $|\lambda|||v||_{L^{\infty}(\Omega)}\leq C||f||_{L^{\infty}(\Omega)}$, it is easy to replace the estimate (3.7) to 
\begin{equation*}
|\lambda|||g||_{W^{-1,p}_{0}(\Omega')}\leq C K {C_{n}}^{1/n} r^{n/p} (\eta+1)^{n/p}||f||_{L^{\infty}(\Omega)}    
\end{equation*}
for $p>n$ since 
\begin{equation*}
\begin{split}
|\lambda|||v\cdot\nabla\theta_0||_{W^{-1,p}_{0}(\Omega')}
&\leq|\lambda|||v\theta_0||_{L^{p}(\Omega)}\\
&\leq C||\theta_0||_{L^{p}(\Omega')}||f||_{L^{\infty}(\Omega)}\\
&\leq C K {C_{n}}^{1/p}r^{n/p}(\eta+1)^{n/p}||f||_{L^{\infty}(\Omega)}.
\end{split}
\end{equation*}
\end{rem}

%Subsection 3.3
\subsection{Interpolation}

We now prove the a priori estimate (1.4) for $p>n$. The parameters $\eta$ and the constant $\delta$ are determined only through the constants $C_{p}, C_{I}$ and $C_{1}$--$C_{3}$. Although we eventually obtain the estimate (1.12) for all $p>n$, firstly we prove the case $p>2n$ as observed by Remark 3.4. The case $p>2n$ is enough for analyticity but, for the completeness, we prove the estimate (1.4) for all $p>n$.\\  

\noindent
\textit{Proof of Theorem 1.1.}  
We set $\delta=\delta_{\eta}=(\eta+2)^{2}/{r_0}^{2}$ and now take $r=1/|\lambda|^{1/2}$ for $\lambda\in \sum_{\vartheta,\delta}$. We then observe that $r=1/|\lambda|^{1/2}$ and $\eta\geq 1$ automatically satisfy $r(\eta+2)\leq r_0$ for $\lambda\in \Sigma_{\vartheta,\delta}$. We take a $C^{2}$-bounded domain $\Omega''$ such that $\Omega_{x_0,r_0}\subset \Omega''\subset \Omega$. Then, $\Omega'\subset \Omega''$ for all $\eta\geq 1$ and $r>0$ satisfying $(\eta+2)r\leq r_0$. We first prove: \\

\noindent
\textit{Case} (I) $p>2n$. We apply the $L^{p}$-estimates (1.7) to $u=v\theta_{0}$ and $p=\hat{q}\theta_{0}$ in $\Omega''$ to get 
\begin{align*}
&|\lambda| ||u||_{L^{p}(\Omega'')}+|\lambda|^{1/2} ||\nabla u||_{L^{p}(\Omega'')}+||\nabla^{2}u||_{L^{p}(\Omega'')}+||\nabla p||_{L^{p}(\Omega'')}\\
&\leq C_{p}\left(||h||_{L^{p}(\Omega'')}+||\nabla g||_{L^{p}(\Omega'')}+|\lambda| ||g||_{W^{-1,p}_{0}(\Omega'')}\right),  
\end{align*} 
where the constant $C_{p}$ depends on $r_0$, but independent of $\eta\geq 1$ and $r>0$ satisfying $(\eta+2)r\leq r_0$. Combining the above estimate and (3.5)--(3.7), we obtain
\begin{align*}
&|\lambda|||u||_{L^{p}(\Omega'')}+|\lambda|^{1/2}||\nabla u||_{L^{p}(\Omega'')}+||\nabla^{2} u||_{L^{p}(\Omega'')}+||\nabla p||_{L^{p}(\Omega'')}\\
&\leq C_8|\lambda|^{-n/2p}\left( (\eta+1)^{n/p}||f||_{L^{\infty}(\Omega)}+(\eta+1)^{-(1-2n/p)}||M_{p}(v,q)||_{L^{\infty}(\Omega)}(\lambda) \right),  \tag{3.20}
\end{align*}    
with the constant $C_{8}$ independent of $r=1/|\lambda|^{1/2}$ and $\eta\geq 1$. We next estimate the $L^{\infty}$-norms of $u$ and $\nabla u$ in $\Omega$ by interpolation. Applying the interpolation inequality (1.11) for $\varphi=u$ and $\nabla u$ implies the estimates  
\begin{align*}
||u||_{L^{\infty}(\Omega_{x_0,r})}
&\leq C_{I}r^{-n/p}\Big(||u||_{L^{p}(\Omega_{x_0,2r})}+r||\nabla u||_{L^{p}(\Omega_{x_0,2r})}\Big),\\
||\nabla u||_{L^{\infty}(\Omega_{x_0,r})}
&\leq C_{I}r^{-n/p}\Big(||\nabla u||_{L^{p}(\Omega_{x_0,2r})}+r||\nabla^{2} u||_{L^{p}(\Omega_{x_0,2r})}\Big).
\end{align*}
Summing up these norms together with $|\lambda|^{n/2p}||\nabla^{2}u||_{L^{p}(\Omega_{x_{0},r})}$ and $|\lambda|^{n/2p}||\nabla p||_{L^{p}(\Omega_{x_{0},r})}$, we have
\begin{align*}
&M_{p}(u,p)(x_0,\lambda)\\
&\leq C_9r^{-n/p}\left(|\lambda|||u||_{L^{p}(\Omega_{x_0,2r})}+|\lambda|^{1/2}||\nabla u||_{L^{p}(\Omega_{x_0,2r})}+||\nabla^{2} u||_{L^{p}(\Omega_{x_0,2r})}+||\nabla p||_{L^{p}(\Omega_{x_0,2r})} \right)         \tag{3.21}
\end{align*} 
with the constant $C_9$ independent of $r>0$ and $\eta\geq 1$. Since $(u,\nabla p)$ agrees with $(v,\nabla q)$ in $\Omega_{x_0,r}$ and $\Omega_{x_0,2r}\subset \Omega''$, combining (3.20) with (3.21) yields  
\begin{equation*}
M_{p}(v,q)(x_0,\lambda)\leq C_{10}\left((\eta+1)^{n/p}||f||_{L^{\infty}(\Omega)}+(\eta+1)^{-(1-2n/p)}||M_{p}(v,q)||_{L^{\infty}(\Omega)}(\lambda)\right)  \tag{3.22}
\end{equation*}  
with $C_{10}=C_{8}C_{9}.$ We take a supremum for $x_0\in \Omega$ and now fix the parameters $\eta\geq 1$ so that $C_{10}(\eta+1)^{-(1-2n/p)}<1/2$. Then, we obtain (1.4) with $C=2C_{10}$ for $p>2n$.\\ 

We shall complete the proof by showing the uniformly local $L^{p}$-bound for second derivatives of $(v,q)$ for all $p>n$.\\

\noindent
\textit{Case} (II) $p>n$. Since $|\lambda|||g||_{W^{-1,\tilde{p}}_{0}}$ is bounded for $\tilde{p}>2n$, we may assume $(v,\nabla q)\in W^{2,\tilde{p}}_{\textrm{loc}}(\bar{\Omega})\times L^{\tilde{p}}_{\textrm{loc}}(\bar{\Omega})$ for $\tilde{p}>2n$. By using $|\lambda|||v||_{L^{\infty}(\Omega)}\leq C||f||_{L^{\infty}(\Omega)}$ for $\lambda \in \Sigma_{\vartheta,\delta}$ with $\delta =\delta_{\tilde{p}}$ we replace the estimate (3.7) to 
\begin{equation*}
|\lambda|||g||_{W^{-1,p}_{0}(\Omega')}\leq CK {C_n}^{1/p}r^{n/p}(\eta+1)^{n/p}||f||_{L^{\infty}(\Omega)}
\end{equation*}
by Remark 3.4. Then, we are able to replace the estimate (3.22) to 
\begin{equation*}
||M_{p}(v,q)||_{L^{\infty}(\Omega)}(\lambda)
\leq C_{11}\left((\eta+1)^{n/p}||f||_{L^{\infty}(\Omega)}+(\eta+1)^{-(1-n/p)}||M_{p}(v,q)||_{L^{\infty}(\Omega)}(\lambda) \right).
\end{equation*} 
Letting $\eta\geq 1$ large so that
$C_{11}(\eta+1)^{-(1-n/p)}<1/2$, we obtain (1.4) for all $p>n$. The proof is now complete.

%Remark 3.5
\begin{rem}\normalfont (Robin boundary condition)
Concerning the Robin boundary condition, we replace the Dirichlet boundary condition for the localized equations (3.3) to the inhomogeneous boundary condition with a tangential vector field $k$,
\begin{equation*}
B(u)=k,\quad u\cdot n_{\Omega''}=0\quad \textrm{on}\ \partial\Omega''.
\end{equation*}  
Instead of the estimate (1.7), we apply the $L^{p}$-estimate of the form,
\begin{align*}
&|\lambda| ||u||_{L^{p}(\Omega'')}+|\lambda|^{1/2} ||\nabla u||_{L^{p}(\Omega'')}+||\nabla^{2}u||_{L^{p}(\Omega'')}+||\nabla p||_{L^{p}(\Omega'')}\\
&\leq C(||h||_{L^{p}(\Omega'')}+||\nabla g||_{L^{p}(\Omega'')}+|\lambda|||g||_{W^{-1,p}_{0}(\Omega'')}+|\lambda|^{1/2}||k||_{L^{p}(\Omega'')}+||\nabla k||_{L^{p}(\Omega'')} ),
\end{align*}      
where $k$ is identified with its arbitrary extension to $\Omega''$. Since $k=v_{\textrm{tan}}\partial \theta_{0}/\partial n_{\Omega''}$ for $u=v\theta_{0}$ and $p=\hat{q}\theta_0$, we observe that the norms of $k$ in the right-hand side are estimated by the same way with $||\nabla g||_{L^{p}}$ where $g=v\cdot \nabla \theta_{0}$. The above $L^{p}$-estimate for the Robin boundary condition is proved by \cite{ShbS} for bounded and exterior domains by generalizing the perturbation argument to the Dirichlet boundary condition \cite{FS1}. After proving the a priori estimate (1.4) for $f\in L^{\infty}_{\sigma}$ subject to the Robin boundary condition, we verify the existence of solutions for (1.1) and (1.2). In particular, $v\in L^{\infty}_{\sigma}$ (not in $C_{0,\sigma}$).  Then, we are able to define the Stokes operator $A=A_{R}$ in $L^{\infty}_{\sigma}$ in the same way as we did for the Dirichlet boundary condition. Our observations may be summarized as following: 
\end{rem}

%Theorem 3.6
\begin{thm}
Assume that $\Omega$ is a bounded or an exterior domain with $C^{3}$-boundary in $\mathbf{R}^{n}$. Then, the Stokes operator $A=A_{R}$ subject to the Robin boundary condition generates an analytic semigroup on $L^{\infty}_{\sigma}(\Omega)$ of angle $\pi/2$.  
\end{thm}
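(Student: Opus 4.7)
The plan is to run the entire Masuda--Stewart scheme of Section 3 again, with the only changes being the replacement of (1.7) by the Shibata--Shimizu resolvent estimate for the Robin problem, and the treatment of the extra inhomogeneous boundary term generated by localization. Let $(v,\nabla q)$ be a solution of (1.1)--(1.2) subject to $B(v)=0$, $v\cdot n_\Omega=0$ on $\partial\Omega$. Following Section 3.1, I would set $u=v\theta_0$ and $p=\hat q\,\theta_0$; on $\partial\Omega''$ one then has $B(u)=k$ with $k=v_{\mathrm{tan}}\,\partial\theta_0/\partial n_{\Omega''}$, since the derivatives in $B$ land on $\theta_0$ rather than on $v$, the surviving interior contribution being eliminated by $B(v)=0$.

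I would then apply the $L^p$-estimate of \cite{ShbS} in $\Omega''$, which contributes $|\lambda|^{1/2}||k||_{L^p(\Omega'')}+||\nabla k||_{L^p(\Omega'')}$ on the right-hand side. Because $k$ has exactly the structure of the term $v\nabla\theta_0$ already appearing in $g$, the computation behind (3.9)--(3.10), combined with the choice $r=|\lambda|^{-1/2}$ so that the $|\lambda|^{1/2}$-weight matches the $r^{-1}$ weight hidden in $\nabla\theta_0$, produces
\begin{equation*}
|\lambda|^{1/2}||k||_{L^p(\Omega'')}+||\nabla k||_{L^p(\Omega'')}\leq Cr^{n/p}(\eta+1)^{-(1-n/p)}\bigl(r^{-1}||\nabla v||_{L^\infty(\Omega)}+r^{-2}||v||_{L^\infty(\Omega)}\bigr),
\end{equation*}
i.e.\ the same bound as $||\nabla g||_{L^p(\Omega'')}$ in Lemma 3.3. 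The error estimates (3.5)--(3.7) themselves use only that $(v,\nabla q)$ solves (1.1)--(1.2) and the pressure estimate (1.5); they do not use the Dirichlet condition anywhere. Since $\Omega$ is bounded or exterior with $C^3$ boundary, strict admissibility holds by \cite[Theorem 2.5]{AG1} and \cite[Theorem 3.1]{AG2}, so Lemma 3.3 transfers verbatim to the Robin setting.

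With the analogue of (3.20) in hand, the interpolation step of Section 3.3 reproduces (3.22) with a possibly larger constant, and choosing $\eta$ sufficiently large to absorb the bad term yields the a priori estimate (1.4) first for $p>2n$ and then, via Remark 3.4, for all $p>n$, exactly as in the proof of Theorem 1.1. Having (1.4) for $f\in L^\infty_\sigma(\Omega)$, I would construct the solution $(v,\nabla q)$ by approximating $f$ almost everywhere, with uniform $L^\infty$-bound, by $f_m\in C^\infty_{c,\sigma}$ via \cite[Lemma 6.3]{AG1} or \cite[Lemma 5.1]{AG2}; the Robin resolvent problem is uniquely solvable for each $f_m$ in the $\tilde L^p$-framework of \cite{ShbS}, and (1.4) combined with dominated convergence lets one pass to a solution for $f$. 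Note that the resulting velocity lies in $L^\infty_\sigma(\Omega)$ but not in $C_{0,\sigma}(\Omega)$, since the Robin boundary condition does not force $v$ to vanish on $\partial\Omega$. Injectivity of $R(\lambda)$ follows from (1.4), and \cite[Proposition B.6]{ABHN} produces the closed operator $A_R$ with $R(\lambda)=(\lambda-A_R)^{-1}$. Since (1.4) is valid for every $\vartheta\in(\pi/2,\pi)$, the generated analytic semigroup has angle $\pi/2$ on $L^\infty_\sigma(\Omega)$.

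The main obstacle I expect is controlling the extra boundary term $k$ uniformly in the two parameters $\eta$ and $\lambda$: one has to verify that the combined weight $|\lambda|^{1/2}||k||_{L^p}+||\nabla k||_{L^p}$ enters with exactly the same power $(\eta+1)^{-(1-n/p)}$ as $||\nabla g||_{L^p}$, so that the absorption argument in $\eta$ of Section 3.3 is not spoiled, and that the constant in the Shibata--Shimizu estimate for the inhomogeneous Robin problem is uniform in $\lambda\in\Sigma_{\vartheta,0}$ and in the auxiliary $C^2$-bounded domain $\Omega''$ chosen after fixing $r_0$. Once these uniformities are secured, the rest of the argument is a straightforward transcription of the Dirichlet proof.
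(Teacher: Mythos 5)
Your proposal follows essentially the same route as the paper's own treatment (Remark 3.5): localize, observe that $B(u)=k$ with $k=v_{\mathrm{tan}}\,\partial\theta_0/\partial n_{\Omega''}$, invoke the generalized Robin resolvent estimate of Shibata--Shimada in place of (1.7), bound the $k$-terms exactly as $\|\nabla g\|_{L^p}$, and then rerun the absorption and $L^\infty_\sigma$-approximation arguments verbatim. The proposal is correct and matches the paper's argument, including the observation that the resulting velocity lies in $L^\infty_\sigma(\Omega)$ rather than $C_{0,\sigma}(\Omega)$.
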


\section*{acknowledgements}
The authors are also grateful to the anonymous referees for their valuable comments. The work of first author is supported by Grant-in-aid for Scientific Research of JSPS Fellow No. 24--8019. The work of second author is partially supported by Grant-in-aid for Scientific Research, No. 21224001 (Kiban S), No. 23244015 (Kiban A), No. 20654017 (Houga), the Japan Society for the Promotion of Science (JSPS). This work is supported in part by 
the DFG-JSPS  International Research Training Group 1529 on Mathematical Fluid Dynamics.

\appendix

\section{An interpolation inequality near the boundary}

In Appendix A, we give a proof for the inequality (1.11). The inequality (1.11) holds for all $x_0\in \Omega$ and $r\leq r_0$ in a uniformly $C^{1}$-domain even if $\partial\Omega_{x_0,r}$ is not $C^{1}$. \\

We prove (1.11) for $x_0\in \Omega$ and $r\leq r_0$ by a blow-up argument as we did the inequality (2.4). If $B_{x_0}(r)$ is in the interior of $\Omega$, i.e., $\Omega_{x_0,r}=B_{x_0}(r)$, the inequality (1.11) follows from the Sobolev inequality in $B_{0}(1)$. In fact, applying the Sobolev inequality for $\varphi_{r}(x)=\varphi(x_0+rx)$, $\varphi\in W^{1,p}_{\textrm{loc}}(\bar{\Omega})$ yields
\begin{equation*}
||\varphi_{r}||_{L^{\infty}(B_{0}(1))}\leq C_{s}||\varphi_{r}||_{W^{1,p}(B_{0}(1))}.
\end{equation*}  
Since $||\varphi_{r}||_{L^{p}(B_{0}(1))}=r^{-n/p}||\varphi||_{L^{p}(B_{x_0}(r))}$ and $||\nabla \varphi_{r}||_{L^{p}(B_{0}(1))}=r^{1-n/p}||\nabla \varphi||_{L^{p}(B_{x_0}(r))}$, we have
\begin{equation*}
||\varphi||_{L^{\infty}(B_{x_0}(r))}\leq C_{s}r^{-n/p}\left(||\varphi||_{L^{p}(B_{x_0}(r))}+ r||\nabla \varphi||_{L^{p}(B_{x_0}(r))}\right)    \tag{A.1}
\end{equation*}
for $x_0\in \Omega$ and $r>0$ satisfying $d_{\Omega}(x_0)\geq r$. The inequality (A.1) is stronger than (1.11).

If $B_{x_0}(r)$ is located near the boundary, i.e., $d_{\Omega}(x_0)< r$, $\partial \Omega_{x_0,r}$ may not be $C^{1}$. However, the weaker inequality (1.11) holds since we take the norms on $\Omega_{x_0,2r}$ in the right-hand side of (1.11). In the sequel, we prove the inequality (1.11) by flattening the boundary $\partial\Omega$ by rescaling and applying the Sobolev inequality around $\Omega_{x_0,r}$.

%Proposition A.1
\begin{prop}
Let $\Omega$ be a uniformly $C^{1}$-domain in $\mathbf{R}^{n}$, $n\geq 2$.
Let $p>n$. Then, there exist constants $r_0$ and $C$ such that 
 \begin{equation*}
||\varphi||_{L^{\infty}(\Omega_{x_0,r})}\leq Cr^{-n/p}\left(||\varphi||_{L^{p}(\Omega_{x_0,2r})}+ r||\nabla \varphi||_{L^{p}(\Omega_{x_0,2r})}\right)\quad \textrm{for}\ \varphi\in W^{1,p}_{\textrm{loc}}(\bar{\Omega}),   \tag{A.2}
\end{equation*}
and $x_0\in \Omega$, $r\leq r_{0}$ satisfying $d_{\Omega}(x_0)<r$.
\end{prop}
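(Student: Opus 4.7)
My plan is to reduce (A.2) to a single Sobolev embedding with constant uniform in $(x_0, r)$, applied on rescaled domains of bounded Lipschitz geometry. The interior case $d_\Omega(x_0) \geq r$ is already covered by (A.1), so I restrict to $d_\Omega(x_0) < r$.

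\textbf{Rescaling and local description.} I would first fix a nearest boundary point $x_0^* \in \partial\Omega$ to $x_0$ and rotate/translate so that $x_0^* = 0$, the inward normal is $e_n$, and hence $x_0 = (0, d_\Omega(x_0))$. The uniform $C^1$ assumption yields constants $\alpha, \beta, K$ and a $C^1$ function $h$ with $h(0) = 0$, $\nabla' h(0) = 0$, $\sup_{|y'| < \alpha}|\nabla' h| \leq K$ such that $U_{\alpha,\beta,h}(0) \cap \Omega$ is the usual local graph. Define $\phi(y) = \varphi(x_0 + ry)$ on $\Omega^r := r^{-1}(\Omega - x_0)$ and set $c := d_\Omega(x_0)/r \in [0,1)$. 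Choosing $r_0$ small enough that $2r_0 \leq \min\{\alpha, \beta\}$, the slice $\Omega^r_{0,2} := B_0(2) \cap \Omega^r$ lies in the rescaled local chart and is described by
$$\Omega^r_{0,2} = B_0(2) \cap \bigl\{(y', y_n) : y_n + c > h_r(y')\bigr\}, \qquad h_r(y') := r^{-1} h(ry').$$

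\textbf{Uniform Sobolev and scaling back.} Since $\nabla' h_r(y') = \nabla' h(ry')$, the Lipschitz constant of $h_r$ is bounded by $K$, independently of $r$, $x_0$, and $c$. Hence $\Omega^r_{0,2}$ is a bounded Lipschitz domain whose geometric data depends only on $K$. For $p > n$, I would then apply the Sobolev embedding $W^{1,p}(\Omega^r_{0,2}) \hookrightarrow L^\infty(\Omega^r_{0,2})$ with constant $C_S = C_S(p, n, K)$, obtained for instance by composing a uniformly bounded Lipschitz extension operator $W^{1,p}(\Omega^r_{0,2}) \to W^{1,p}(\mathbf{R}^n)$ with the classical embedding $W^{1,p}(\mathbf{R}^n) \hookrightarrow C^0_b(\mathbf{R}^n)$. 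Estimating $||\varphi||_{L^\infty(\Omega_{x_0,r})} = ||\phi||_{L^\infty(\Omega^r_{0,1})} \leq ||\phi||_{L^\infty(\Omega^r_{0,2})}$ and undoing the scaling via $||\phi||_{L^p(\Omega^r_{0,2})} = r^{-n/p}||\varphi||_{L^p(\Omega_{x_0,2r})}$ and $||\nabla\phi||_{L^p(\Omega^r_{0,2})} = r^{1-n/p}||\nabla\varphi||_{L^p(\Omega_{x_0,2r})}$ then yields (A.2) with $C = C_S$.

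\textbf{Main obstacle.} The delicate point is the uniformity of the embedding constant across the family $\{\Omega^r_{0,2}\}$: as $c$ ranges over $[0,1)$, the domain varies from nearly a full half-ball to a thin cap near the top of $B_0(2)$, and one must be sure neither extremal case inflates $C_S$. If invoking a quantitative Lipschitz extension theorem is unappealing, a cleaner alternative is to mirror the contradiction/blow-up argument of Proposition 2.4: a counterexample would furnish a normalized sequence $\phi_m$ on domains $\Omega^m_{0,2}$ with $||\phi_m||_{L^\infty(\Omega^m_{0,1})} = 1$ but $||\phi_m||_{W^{1,p}(\Omega^m_{0,2})} \to 0$. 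Extracting subsequences with $c_m \to c_0 \in [0,1]$ and $h_{r_m} \to h_\infty$ locally uniformly (via Ascoli-Arzel\`a and the bound $|\nabla' h_{r_m}| \leq K$; in particular, $h_\infty \equiv 0$ whenever $r_m \to 0$, since then $h_{r_m}(y') = \int_0^1 \nabla' h(t r_m y') \cdot y' \, dt \to 0$), one passes to a $W^{1,p}$-limit on a Lipschitz limit domain which must simultaneously vanish and have $L^\infty$-norm $1$, contradicting Sobolev embedding on that limit.
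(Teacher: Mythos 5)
Your fallback is essentially the paper's proof, and your direct route is viable, but as written it has one genuine soft spot. You propose to apply the Sobolev embedding $W^{1,p}\hookrightarrow L^{\infty}$ directly on $\Omega^{r}_{0,2}=B_{0}(2)\cap\{y_{n}+c>h_{r}(y')\}$ with a constant depending only on $K$. The obstacle is not the one you name: a ``thin cap'' cannot occur, since $h_{r}(0)=0$, $|\nabla'h_{r}|\leq K$ and $c\in[0,1)$ force $\Omega^{r}_{0,2}$ to contain the fixed cone $\{|y|<2,\ y_{n}>K|y'|\}$. The real issue is the boundary set where the graph $\{y_{n}=h_{r}(y')-c\}$ meets the sphere $\partial B_{0}(2)$: when $K$ is large the two surfaces can be tangent there, producing a cusp, so the intersection of two Lipschitz domains need not be uniformly Lipschitz and a uniform extension operator for the family $\{\Omega^{r}_{0,2}\}$ is not automatic. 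The repair is exactly the reason the statement carries $2r$ on the right-hand side: insert an intermediate domain $U$ with $\Omega^{r}_{0,1}\subset U\subset\Omega^{r}_{0,2}$ whose $C^{1}$ (or Lipschitz) character is bounded in terms of $K$ alone (possible because the rescaled graphs have uniformly bounded $C^{1}$-norms), apply Sobolev on $U$, and enlarge the $W^{1,p}$-norm from $U$ to $\Omega^{r}_{0,2}$. This is precisely the device in the paper's proof, which constructs $C^{1}$-domains $U^{m}$ with $\Omega^{m}_{0,1}\subset U^{m}\subset\Omega^{m}_{0,2}$ and uniformly bounded regularity.

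One further remark on your blow-up fallback: the paper does not derive the contradiction from a Sobolev embedding ``on the limit domain.'' It applies the Sobolev inequality on $U^{m}$ for each fixed $m$, with a constant $C_{s}'$ bounded uniformly in $m$, and concludes $1=\|\phi_{m}\|_{L^{\infty}(\Omega^{m}_{0,1})}\leq C_{s}'\|\phi_{m}\|_{W^{1,p}(\Omega^{m}_{0,2})}<C_{s}'/m\to0$. Your version, which passes to a $W^{1,p}$-limit and argues on the limiting half-space, is slightly circular: the normalization $\|\phi_{m}\|_{L^{\infty}(\Omega^{m}_{0,1})}=1$ does not survive weak $W^{1,p}$-convergence unless one already has a uniform Morrey/Sobolev estimate along the sequence, which is the very thing being proved. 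With the intermediate-domain step inserted, both your direct argument and your contradiction argument close correctly, and the direct one then gives the same conclusion without any compactness.
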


From (A.1) and (A.2), for all $x_0\in \Omega$ and $r\leq r_0$, the inequality (1.11) follows.

%Lemma A.2
\begin{lem}
Let $\Omega$ be a uniformly $C^{1}$-domain in $\mathbf{R}^{n}$, $n\geq 2$.
Let $p>n$. Then, the inequality (1.11) holds for all $\varphi\in W^{1,p}_{\textrm{loc}}(\bar{\Omega})$, $x_0\in \Omega$ and $r\leq r_0$ with the constant $C_{I}$ independent of $x_0$ and $r\leq r_0$ where $r_0$ is the constant in Proposition A.1.
\end{lem}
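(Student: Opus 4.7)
The plan is to simply dichotomize on the position of the ball $B_{x_0}(r)$ relative to $\partial\Omega$ and invoke the two already-established interior and near-boundary estimates. Specifically, fix $x_0\in\Omega$ and $r\le r_0$, and consider the two cases $d_\Omega(x_0)\ge r$ and $d_\Omega(x_0)<r$.

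In the interior case $d_\Omega(x_0)\ge r$ we have $\Omega_{x_0,r}=B_{x_0}(r)$ and $B_{x_0}(r)\subset\Omega_{x_0,2r}$, so the scaled Sobolev inequality (A.1) applied on $B_{x_0}(r)$ directly yields
\begin{equation*}
\|\varphi\|_{L^{\infty}(\Omega_{x_0,r})}
\le C_s r^{-n/p}\bigl(\|\varphi\|_{L^{p}(B_{x_0}(r))}+r\|\nabla\varphi\|_{L^{p}(B_{x_0}(r))}\bigr)
\le C_s r^{-n/p}\bigl(\|\varphi\|_{L^{p}(\Omega_{x_0,2r})}+r\|\nabla\varphi\|_{L^{p}(\Omega_{x_0,2r})}\bigr),
\end{equation*}
which is precisely (1.11) in this case.

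In the boundary case $d_\Omega(x_0)<r$ the set $\partial\Omega_{x_0,r}$ may fail to be $C^1$, but this is exactly the situation addressed by Proposition A.1, whose conclusion (A.2) is literally the desired inequality (1.11). Taking $C_I=\max\{C_s,C\}$ where $C$ is the constant furnished by Proposition A.1 and $C_s$ is the constant in (A.1), both of which depend only on $p$, $n$ and the uniform $C^1$-character of $\partial\Omega$ (through $r_0$), we obtain (1.11) with $C_I$ independent of $x_0\in\Omega$ and $r\le r_0$.

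The essential work has already been done in Proposition A.1 (the blow-up argument flattening the boundary), so there is no real obstacle here; Lemma A.2 is nothing more than the combination of the two regimes. The only point worth verifying carefully is that in both cases the right-hand side norms are taken over $\Omega_{x_0,2r}$, which is immediate since $B_{x_0}(r)\subset\Omega_{x_0,2r}$ in the interior case.
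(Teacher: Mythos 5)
Your proof is correct and follows exactly the paper's own argument: split into the interior case $d_\Omega(x_0)\ge r$, where (A.1) applies and $B_{x_0}(r)\subset\Omega_{x_0,2r}$ gives (1.11), and the boundary case $d_\Omega(x_0)<r$, where Proposition A.1 is precisely (1.11). Taking the maximum of the two constants is the only remaining step, and you have handled it.
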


\begin{proof}
Take arbitrary points $x_0\in \Omega$ and $r\leq r_{0}$. If $d_{\Omega}(x_0)\geq r$, apply (A.1) to get (1.11) with the constant $C_{s}$. If $d_{\Omega}(x_0)< r$, we apply (A.2) for (1.11). 
\end{proof}

\begin{proof}[Proof of Proposition A.1]
We argue by contradiction.  Suppose on the contrary that the inequality (A.2) were false for any choice of constants $r_0$ and $C$. Then, there would exist sequences of points $\{x_m\}_{m=1}^{\infty}\subset \Omega$, $r_m\downarrow 0$ and a sequence of functions $\{\varphi_{m}\}_{m=1}^{\infty}\subset W^{1,p}_{\textrm{loc}}(\bar{\Omega})$ such that 
\begin{equation*}
||\varphi_{m}||_{L^{\infty}(\Omega_{x_m,r_m})}
>mr_{m}^{-n/p}\left( ||\varphi_{m}||_{L^{p}(\Omega_{x_m,2r_m})}+r_m||\nabla \varphi_{m}||_{L^{p}(\Omega_{x_m,2r_m})}\right).
\end{equation*}
Divide the both sides by $M_{m}=||\varphi_{m}||_{L^{\infty}(\Omega_{x_m,r_m})}$ and observe that $\tilde{\varphi}_{m}=\varphi_{m}/M_{m}$ satisfies 
\begin{equation*}
||\tilde{\varphi}_{m}||_{L^{\infty}(\Omega_{x_m,r_m})}=1,\quad r_{m}^{-n/p}\left(||\tilde{\varphi}_{m}||_{L^{p}(\Omega_{x_m,2r_m})}+r_m||\nabla \tilde{\varphi}_{m}||_{L^{p}(\Omega_{x_m,2r_m})}\right)<1/m.
\end{equation*}
Since the points $\{x_m\}_{m=1}^{\infty}\subset \Omega$ accumulate to the boundary by $d_m=d_{\Omega}(x_m)<r_m\downarrow 0$, by rotation and translation of $\Omega$, we may assume $x_{m}=(0,d_m)$. Set $c_m=d_m/r_m<1$. By choosing a subsequence of $\{c_m\}_{m=1}^{\infty}$, we may assume $c_m\to c_0$ as $m\to\infty$ for $c_0\leq 1$. In the sequel, we rescale the domain $\Omega$ around the point $ x_m\in \Omega$. Since $\Omega$ has a uniformly $C^{1}$-boundary, there exists  uniform constants $\alpha, \beta, K$ and $C^{1}$-function $h$ such that the neighborhood of the origin is represented by 
\begin{equation*}
\Omega_{\textrm{loc}}=\{x\in \mathbf{R}^{n}\ |\ h(x')<x_n<h(x')+\beta, |x'|<\alpha\},
\end{equation*}
where $h$ satisfies $h(0)=0$, $\nabla^{'}h(0)=0$ and $||h||_{C^{1}(B^{n-1}_{0}(\alpha))}\leq K$. Here, $B^{n-1}_{0}(\alpha)$ denotes the $n-1$-dimensional open ball centered at the origin with radius $\alpha$.

We rescale $\tilde{\varphi}_{m}$ around $x_m$ by 
\begin{equation*}
\phi_{m}(x)=\tilde{\varphi}_{m}(x_m+r_mx)\quad \textrm{for}\ x\in \Omega^{m},
\end{equation*}
where $\Omega^{m}=\{x\in \mathbf{R}^{n}\ |\ x_m+r_mx\in \Omega \}$. Then, the rescaled domain $\Omega^m$ expands to a half space $\mathbf{R}^{n}_{+,-c_0}=\{x\in \mathbf{R}^{n}\ |\ x_n>-c_0  \}$. In fact, $\Omega_{\textrm{loc}}$ is rescaled to 
\begin{equation*}
\Omega_{\rm{loc}}^{m}=\left\{x\in \mathbf{R}^{n}\ \Bigg|\ h_{m}(x')-c_m<x_n<h_{m}(x')-c_m+\frac{\beta}{r_m}, |x'|<\frac{\alpha}{r_m}   \right\},
\end{equation*}
where $h_{m}(x')=h(r_mx')/r_m$. The function $h_{m}$ and $\nabla h_m$ converges to zero locally uniformly in $\mathbf{R}^{n-1}$. Thus, $\Omega^{m}_{\textrm{loc}}$ expands to $\mathbf{R}^{n}_{+,-c_0}$. Since $\Omega^{m}_{0,1}=B_{0}(1)\cap \Omega^{m}$ may not be a $C^{1}$-domain on the intersection $\partial\Omega^{m}\cap B_{0}(1)$, we take a $C^{1}$-bounded domain $U^{m}$ so that $\Omega^{m}_{0,1}\subset U^{m}\subset \Omega^{m}_{0,2}$ and the $C^{1}$-regularity of $\partial U^{m}$ is uniformly bounded for $m\geq 1$. Since the $C^{1}$-norm of $h_{m}$ is locally uniformly bounded for $m\geq 1$ in $\mathbf{R}^{n-1}$, we are able to take such the $C^{1}$-domain $U^{m}$. 

Now, we apply the Sobolev inequality for $\phi_{m}$ in $U^{m}$ to get 
\begin{equation*}
||\phi_{m}||_{L^{\infty}(U^{m})}\leq C_{s}'||\phi_{m}||_{W^{1,p}(U^m)}
\end{equation*}
with the constant $C_{s}'$. The constant $C_{s}'$ depends on $m\geq 1$ but is bounded for all $m\geq 1$ since the $C^{1}$-regularity of $\partial U^{m}$ is uniformly bounded. Since the estimates for $\tilde{\varphi}_m$ are inherited to 
\begin{equation*}
||\phi_{m}||_{L^{\infty}(\Omega^{m}_{0,1})}=1,\quad ||\phi_{m}||_{W^{1,p}(\Omega^{m}_{0,2})}<1/m,
\end{equation*}
it follows that 
\begin{align*}
1=||\phi_{m}||_{L^{\infty}(\Omega^{m}_{0,1})}
&\leq ||\phi_{m}||_{L^{\infty}(U^{m})}  \\
&\leq C_{s}'||\phi_{m}||_{W^{1,p}(U^m)}\\
&\leq C_{s}'||\phi_{m}||_{W^{1,p}(\Omega^{m}_{0,2})}<C_{s}'/m\to 0 \quad \textrm{as}\ m\to\infty.
\end{align*}
We reached a contradiction. The proof is now complete. 
\end{proof}

\end{document}